\newcommand{\Hquad}{\hspace{0.5em}}
\newtheorem{theorem}{Theorem}[section]
\newtheorem{proposition}[theorem]{Proposition}
\newtheorem{definition}[theorem]{Definition}
\newtheorem{corollary}[theorem]{Corollary}
\newtheorem{lemma}[theorem]{Lemma}
\newtheorem{sub-lemma}[theorem]{Sub-Lemma}
\newtheorem{remark}[theorem]{Remark}
\def\A{\mathcal{A}}
\def\C{\mathcal{C}}
\def\R{\mathcal{R}}
\def\P{\mathcal{P}}
\def\Q{\mathcal{Q}}
\def\D{\Delta}
\def\NN{\mathbb{N}}
\def\PP{\mathbb{P}}
\def\ZZ{\mathbb{Z}}
\DeclareMathOperator{\diam}{diam}
\let\eps=\varepsilon
\def\E{\mathcal{E}}
\def\D{\mathcal{D}}
\def\NN{{\mathbb N}}
\newcommand{\Dom}{\Delta_\omega}
\begin{document}
\title{Quenched decay of correlations for one dimensional random Lorenz maps}
\author{Andrew Larkin}
\address{Department of Mathematical Sciences, Loughborough University,
Loughborough, Leicestershire, LE11 3TU, UK}
\email{a.larkin2@lboro.ac.uk}
\thanks{}
\keywords{Lorenz attractors, random dynamical systems, quenched correlation decay.}
\subjclass{Primary 37A05, 37C10, 37E05}
\begin{abstract}
We study rates of mixing for small random perturbations of one dimensional Lorenz maps. Using a random tower construction, we prove that, for H\"older observables, the random system admits exponential rates of quenched correlation decay.
\end{abstract}
\date{\today}
\maketitle
\section{Introduction}
In 1963, Lorenz \cite{Lo} introduced the following system of equations 
\begin{equation}\label{LS}
\dot x = -10x+10y, \,\, \dot y = 28x-y-xz, \,\, \dot z = -\frac{8}{3}z+xy
\end{equation}
as a simplified model for atmospheric convection. Numerical simulations performed by Lorenz showed that the above system exhibits sensitive dependence on initial conditions and has a non-periodic ``strange" attractor.  Since then, \eqref{LS} became a basic example of a chaotic deterministic system that is notoriously difficult to analyse. We refer to \cite{AP} for a thorough account on this topic.

It is now well known that  a useful technique to analyse the statistical properties of such a flow, and any nearby flow in the $C^2$-topology, is to study the dynamics of the flow restricted to a Poincar\'e section via a well defined Poincar\'e map \cite{APPV}. Such a Poincar\'e map admits an invariant stable foliation; moreover, it is strictly uniformly contracting along stable leaves \cite{APPV}. Therefore, the dynamics of the Poincar\'e map can be understood via quotienting along stable leaves; i.e., by studying the dynamics of its one dimensional quotient map along stable leaves. The above technique have been employed to obtain statistical properties of Lorenz flows \cite{HM} and to prove that  such statistical properties of this family of flows is stable under deterministic perturbations \cite{AS, BR, BMR}. This illustrates the importance of understanding the statistical properties of one-dimensional Lorenz maps. 

One-dimensional Lorenz maps have been thoroughly studied in the literature. In \cite{DO}, which is the main inspiration for this paper, a deterministic Lorenz-like map is studied for which it is proved there is exponential decay of correlation. Additionally, in \cite{BR} they examine a perturbed family of Lorenz-like maps with differing singularity points and show statistical stability. Another example is \cite{AS1}, where they examine a family of perturbed contracting Lorenz-like maps, in contrast to the expanding case. They show that the set of points that have not achieved exponential growth in the derivative or slow recurrence to the critical point at a given time decays exponentially, which thereby implies further statistical properties.
 
In this paper, we study rates of mixing for small random perturbations of such one dimensional maps. We use a random tower construction to prove that for H\"older observables the random system admits exponential rates of quenched correlation decay. 
The paper is organised as follows. In Section \ref{sec2} we present the setup and introduce the random system under consideration. Section \ref{sec2} also includes the main result of the paper, Theorem \ref{thm:1d}. Section \ref{proofs} includes the proof of Theorem \ref{thm:1d}. Section \ref{appendix} is an Appendix, which contains a version of the abstract random tower result of \cite{BBMD, Du}, which is used to deduce Theorem \ref{thm:1d}.

\noindent{\bf Acknowledgment}: I would like to thank Marks Ruziboev for our helpful discussions throughout this work. I would also like to thank anonymous referees whose comments improved the presentation of the paper.

\section{Setup}\label{sec2}

\subsection{Our unperturbed system} \label{sec:unperturbed}  
We assume that the following conditions hold.
\begin{itemize}
\item[(A1)] $T_0:I\to I$, $I=[-\frac12, \frac12]$, is $C^{1}$  on $I\setminus\{0\}$  with a singularity at $0$ and one-sided limits 
\mbox{$T_0(0^+)<0$} and $T_0(0^-)>0$. Furthermore, $T_0$ is uniformly expanding on $I\setminus\{0\}$, i.e.
		there are constants $\tilde C>0$ and $\ell>0$  such that $DT_0^n(x)> \tilde{C} e^{n \ell}$ for all $n\ge1$ whenever $x\notin \bigcup_{j=0}^{n-1}(T_0^{j})^{-1}(0)$;
\item[(A2)] There exists $C>1$ and $0<\lambda_0<\frac12$ such that in a one sided neighborhood of $0$ 
\begin{align}C^{-1}|x|^{\lambda_0-1}\le |DT_0(x)|\le C|x|^{\lambda_0-1}. \label{eq1}\end{align}
Moreover, for any $\alpha \in (0,1)$, $1/DT_0$ is $\alpha$-H\"older on $[-\frac12, 0)$ and $(0,  \frac12]$ with $\alpha$-H\"older constant $K=K(\alpha)$;
\item[(A3)] $T_0$ is transitive (for the construction, we use that pre-images of $0$ is dense in $I$).\\
\end{itemize} 

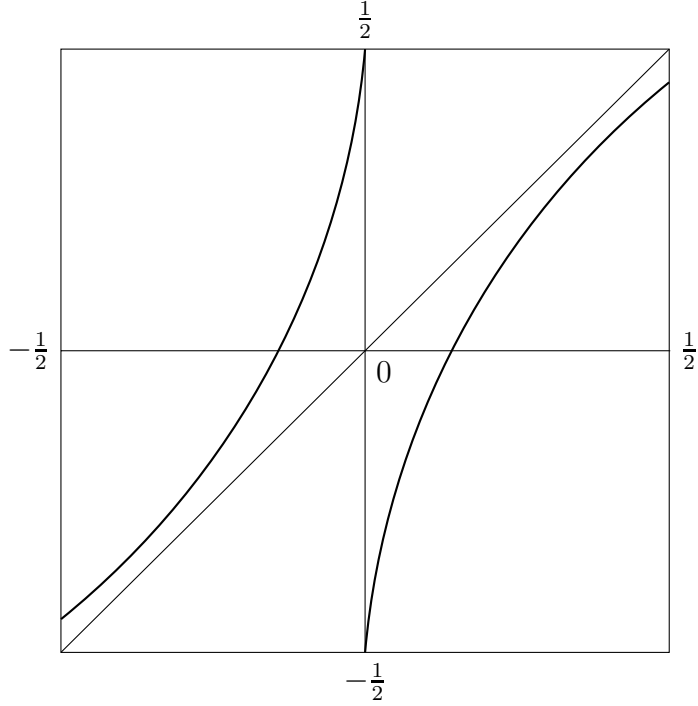
\begin{figure}
\begin{center}
\begin{tikzpicture}
	\draw (0,0) rectangle (8,8);
	\draw (4,0) -- (4, 8) ;
	\draw (0,4) -- (8, 4)  ;
	\draw (0,0) -- (8, 8);
	\draw[thick] (4,8) arc (-5:-50.75:11cm);
	\draw[thick] (4,0) arc (175:129.25:11cm);
  	\draw (0,4) -- (0 ,4) node[anchor=east] {$-\frac{1}{2}$};
	\draw (4,4) -- (4,4) node[anchor=north west] {$0$};
	\draw (8,4) -- (8,4) node[anchor=west] {$\frac{1}{2}$};
	\draw (4,8)  node[anchor=south] {$\frac{1}{2}$};
	\draw (4,0)  node[anchor=north] {$-\frac{1}{2}$};
\end{tikzpicture}
\end{center}
\caption{The one-dimensional Lorenz-like map $T_0$.} \label{fig:1}
\end{figure}
See Figure \ref{fig:1} on the next page for a visual depiction of this graph. Notice that (A2) implies that for all $x,y \in I$ we have
\begin{equation}\label{eq:2a}
|DT_0(x) - DT_0(y)| \le K' \frac{|x-y|^{\alpha}}{|x|^{1 - \lambda_0}|y|^{1 - \lambda_0}},
\end{equation}
where $K' = C^2 K$. Furthermore, if $1 - \lambda_0 \le \alpha$, then $DT_0$ is \textit{locally H\"older}, in the sense that for all $x,y \in I$ we have  
\begin{equation}\label{eq2b}
|DT_0(x) - DT_0(y)| \le K' \frac{|x-y|^{\alpha}}{|x|^{\alpha}|y|^{\alpha}}. 
\end{equation}

\begin{definition}\label{def:cont}
Let $\{T_\varepsilon : I \to I  \}_{\varepsilon > 0}$ be a family of maps. For some fixed $\alpha \in (0,1)$, we say $T_\eps$ is $C^{1 + \alpha}$ on $I \setminus \{0 \}$  if $T_\eps$ is $C^1$ on $I \setminus \{0 \}$ and $\frac{1}{DT_\eps}$ is $\alpha$-H\"older on $I \setminus \{0 \}$. Furthermore, we say $\{T_\varepsilon : I \to I  \}_{\varepsilon > 0}$  is a continuous family of $C^{1 + \alpha}(I \setminus \{0 \})$ maps if 
 $$\lim_{\varepsilon \to \varepsilon_0} \| T_\varepsilon - T_{\varepsilon_0} \|_{\infty} = 0,$$
 and
 $$ \lim_{\varepsilon \to \varepsilon_0}\Big\|\frac{1}{DT_\varepsilon}-\frac{1}{DT_{\varepsilon_0}}\Big\|_{C^\alpha}=0.$$
\end{definition}

\subsection{Our perturbed system}\label{ssec:1drd} Let  $T_0:I\to I$  be the  map introduced in subsection \ref{sec:unperturbed}. Notice that $T_0$ is $C^{1+\alpha}$ on $I \setminus \{0\}$ for any $\alpha \in (0,1)$ according to the definition in the previous subsection. Fix some $\alpha \in [1- \lambda_0,1)$. Define $\mathcal A(\eps)$ to be a $C^{1+\alpha}$ continuous family of maps containing $T_0$, in the sense of Definition \ref{def:cont}. Thus, there exists a sufficiently small interval  $[\lambda_0, \bar\lambda] \subset (0, \frac12)$, $\lambda_0<\bar\lambda$ with $T_\lambda\in\mathcal A(\eps)$ for any $\lambda\in [\lambda_0, \bar\lambda]$. Note that for $\lambda \in (\lambda_0, \bar\lambda]$, the map $T_\lambda$ satisfies the order of singularity condition (A2) with respect to $\lambda$ instead of $\lambda_0$.
Additionally, note that for any $\lambda\in [\lambda_0, \bar\lambda]$ we have
\begin{equation}\label{eq:2}
|DT_\lambda(x) - DT_\lambda(y)| < K' \frac{|x-y|^{\alpha}}{|x|^{\alpha}|y|^{\alpha}}.
\end{equation}



Let $P$ be the normalised Lebesgue measure on $[\lambda_0, \bar\lambda]$.  Let $\Omega=[\lambda_0, \bar\lambda]^{\ZZ}$, $\PP=P^{\ZZ}$ and $\sigma:\Omega\to\Omega$ be the left shift map; i.e., $\omega'=\sigma(\omega)$ if and only if $\omega'_{i}=\omega_{i+1}$ for all $i\in \ZZ$. Then $\sigma$ is an invertible map that preserves $\PP$.  Notice that $\omega$ denotes a bi-infinite sequence of parameter values from $[\lambda_0, \bar\lambda]$, i.e.
$$
\omega = \dots \omega_{-2} \omega_{-1} \omega_{0} \omega_{1} \omega_{2} \dots, \quad \omega_i \in [\lambda_0, \bar\lambda] \Hquad \forall i \in \mathbb Z.
$$\\

We express the random dynamics of our system in terms of the skew product 
$$S :\Omega \times I\to \Omega \times I,$$ where $$S(x,\omega)= (\sigma(\omega), T_{\omega_0}(x)).$$
Iterates of $S$ are defined naturally as
\[
S^n(x,\omega)= (\sigma^n(\omega), T^n_{\omega}(x)), \quad
T^n_{\omega}(x):=T_{\omega_{n-1}}\circ\dots\circ T_{\omega_0}(x).
\]
To simplify the notation, we denote $T_{\omega} = T_{\omega_0}$. We assume uniform expansion on random orbits: for every $\omega\in\Omega$,
\begin{equation}\label{eq:unifexp}
|DT^n_\omega(x)| > \tilde{C} e^{n \ell} \text{ for all } n\ge1,  \text{ for some }\tilde C>0, \ell>0,
\end{equation}
whenever $x\notin \bigcup_{j=0}^{n-1}(T_\omega^j)^{-1}(0)$.

Here we are looking at the quenched statistical properties of  $S$, i.e. we study statistical properties of the system generated by the  compositions $T^n_{\omega}$ on $I$ for almost every $\omega\in \Omega$, which we refer to as $\{T_\omega\}$ without confusion, since the underlying driving  process $(\Omega, \sigma, \PP)$ is fixed. 
We call a family of Borel probability measures $\{\mu_\omega\}_{\omega\in\Omega}$ on $I$ \emph{equivariant} if $\omega\mapsto\mu_\omega$ is measurable and 
\[
{T_\omega}_\ast\mu_\omega=\mu_{\sigma\omega} \text{ for }
\PP \text{ almost all }\omega\in\Omega.
\]\\
For fixed $\eta \in (0,1)$, let $\C^{\eta}(I)$ denote the set of $\eta$-H\"older functions on $I$, and let $L^\infty(I)$ denote the set of bounded functions on $I$. The following theorem is the main result of this paper.
\begin{theorem}\label{thm:1d} The random system $\{T_\omega\}$ admits a unique equivariant family of absolutely continuous probability measures $\{\mu_\omega\}$. Moreover, there exists a constant $ b >0$ such that for every $\varphi \in \C^{\eta}(I)$ and $\psi \in L^\infty(I)$, we have
\[
\left|\int(\varphi\circ T_\omega^n)\psi d\mu_\omega-\int \varphi d\mu_{\sigma^n\omega}\int \psi d\mu_\omega\right|\le C_{\varphi, \psi}  e^{-bn}
\] 
and 
\[
		\Big| \int (\varphi \circ T^n_{\sigma^{-n} \omega}) \psi d\mu_{\sigma^{-n}\omega} - \int \varphi d\mu_\omega \int \psi d\mu_{\sigma^{-n}\omega} \Big| \le C_{\varphi, \psi} e^{-bn}
\] 
for some constant $C_{\varphi, \psi}>0$ which only depends on $\varphi$ and $\psi$ and is uniform for all $\omega \in \Omega$. 

 \end{theorem}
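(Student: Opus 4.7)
The plan is to reduce Theorem \ref{thm:1d} to the abstract random tower result presented in the Appendix, by building a suitable random Young tower over the system $\{T_\omega\}$. This follows the general strategy of \cite{DO} for the deterministic case, adapted to the random setting. Concretely, I would fix a reference interval $\Delta_0 \subset I$ bounded away from the singularity at $0$ (e.g.\ a small neighbourhood of a point whose orbit has robust expansion), and define a family of random return times $R_\omega : \Delta_0 \to \NN$ together with a partition $\{\Delta_{\omega,j}\}$ of $\Delta_0$ (mod zero) such that each $T_\omega^{R_\omega}|_{\Delta_{\omega,j}}$ is a $C^{1+\alpha}$ diffeomorphism onto $\Delta_{\sigma^{R_\omega}\omega}$ with uniformly bounded distortion. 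The candidate construction uses the pre-images of $0$ (which are dense by (A3)) to design a Markov partition, and stops the induction only when a full branch returns to $\Delta_0$ with good expansion and control of distance to the singularity.

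The key analytic steps will be the following. First, I would prove uniform bounded distortion along inverse branches of $T_\omega^n$: using that $1/DT_\lambda$ is $\alpha$-H\"older uniformly in $\lambda \in [\lambda_0,\bar\lambda]$, together with \eqref{eq2b} and \eqref{eq:2}, the standard telescoping argument yields a distortion constant independent of both $\omega$ and the branch. Second, I would establish exponential tails for the return times,
\[
(\PP \otimes \leb)\ch{(\omega,x) : R_\omega(x) > n} \le C_0 e^{-c_0 n},
\]
using the uniform expansion on random orbits \eqref{eq:unifexp} to control the length of cylinder sets $\ch{R_\omega > n}$ that have not yet achieved a full return, combined with the singularity estimate (A2) to bound the measure of points that fall too close to $0$ before returning. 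Third, I would verify the ``big image'' and aperiodicity conditions required by the abstract framework; aperiodicity follows from topological transitivity (A3) propagated to almost every realisation $\omega$.

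Once the tower is in place with exponential return time tails and uniformly bounded distortion, I would invoke the abstract random tower theorem from Section \ref{appendix} (the version of \cite{BBMD, Du}) to obtain simultaneously the unique equivariant family of absolutely continuous probability measures $\{\mu_\omega\}$ and the exponential quenched decay of correlations against H\"older observables. Both the forward and backward correlation inequalities in the statement follow directly from the conclusion of that abstract theorem, once one checks that H\"older regularity on $I$ lifts to the required regularity on the tower, which is a standard check using the Lipschitz (or H\"older) property of the projection and the uniform expansion between levels.

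The main obstacle I expect is the construction of the random return time with exponential tails in the presence of the singularity at $0$. Unlike the smooth expanding case, points whose forward orbit lands very close to $0$ experience an arbitrarily large derivative but also an arbitrarily large distortion, so the induction must carefully balance ``stopping early'' (to control distortion) against ``waiting longer'' (to obtain a full branch back to $\Delta_0$). The randomness compounds this, since the singularity geometry of $T_{\omega_j}$ varies with $j$, so the estimates of \cite{DO} must be made quantitatively uniform over $\lambda \in [\lambda_0,\bar\lambda]$ using the continuity assumption on the family $\mathcal{A}(\eps)$ together with \eqref{eq:2}. Once this uniform-in-$\omega$ tail estimate is obtained, the remaining steps are essentially bookkeeping within the abstract framework.
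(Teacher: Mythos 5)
Your high-level strategy — build a random Young tower, verify uniformly-in-$\omega$ exponential return time tails, bounded distortion and aperiodicity, then invoke the abstract theorem of \cite{BBMD, Du} — is indeed the paper's strategy, and your remarks about lifting H\"older observables to the tower and checking $\bar\varphi_\omega\in\mathcal{L}_\infty^{K_\omega}$, $\bar\psi_\omega h_\omega\in\mathcal{F}_\gamma^{K_\omega}$ are exactly the final step. However, the way you propose to execute the two central technical steps does not work, and the gap is precisely where the singularity makes Lorenz maps hard.

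First, you claim that ``the standard telescoping argument yields a distortion constant independent of $\omega$ and the branch'' from the H\"older property of $1/DT_\lambda$ together with \eqref{eq2b} and \eqref{eq:2}. This is not true here. The estimate \eqref{eq2b} reads $|DT_0(x)-DT_0(y)|\le K'\frac{|x-y|^\alpha}{|x|^\alpha|y|^\alpha}$, which degenerates as $x,y\to 0$, so the $i$-th term in the telescoped sum $\sum_i |x_{\omega,i}-y_{\omega,i}|^\alpha/|x_{\omega,i}|^\alpha$ is \emph{not} controlled by uniform expansion alone: on iterates where the orbit lands very close to $0$, the ratio $|x_{\omega,i}-y_{\omega,i}|/|x_{\omega,i}|$ can be close to $1$, and returns to a neighbourhood of $0$ can accumulate unbounded distortion. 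The paper's resolution, following Benedicks--Carleson, is the exponential partition $I_{r,m}$ of $\Delta_0=(-\delta,\delta)$ into cells of width $\sim e^{-r}/r^\vartheta$ together with a chopping algorithm at essential returns. This guarantees that at every return the image interval sits inside at most three adjacent cells $I_{r,m}$, giving the crucial bound $|J_{\omega,\nu_i}|\lesssim e^{-r_i}/r_i^\vartheta$ in \eqref{eq5}, from which the return contribution to the distortion sum becomes $\sum_{r\ge r_0} C_2/r^{\vartheta\alpha}<\infty$ (Lemma \ref{lem:bdsm}). Without this combinatorial mechanism there is no uniform distortion constant, and the downstream estimates collapse.

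Second, and related, you propose to base the tower on a reference interval bounded away from $0$, whereas the paper (and \cite{DO}, which you cite as the model) induces on a small neighbourhood $\Delta^\ast=(-\delta^\ast,\delta^\ast)$ of the singularity. Inducing at $0$ is what makes the preimages of $0$ into a natural Markov structure and lets Lemma \ref{lem:FR} produce, inside every long escape interval, a subinterval mapping diffeomorphically onto $\Delta^\ast$ in a uniformly bounded number of steps. Your proposal also skips the intermediate \emph{escape time} $E_\omega$ altogether: in the paper this is not bookkeeping but the device that turns the exponential bound on total return depth (Lemmas \ref{lem2}--\ref{lem:32}) into an exponential tail for the escape time (Proposition \ref{prop:Escape}), which is then iterated to get the tail of the true return time $\tau_\omega$ (Lemma \ref{C5}). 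Asserting that uniform expansion on random orbits plus (A2) directly gives $(\PP\otimes\leb)\{R_\omega>n\}\le C_0e^{-c_0n}$ hides exactly the part of the argument that is nontrivial; the uniform expansion \eqref{eq:unifexp} controls growth of intervals away from $0$ but does nothing by itself to control the measure of points that repeatedly shadow the singularity without achieving a full return. In short: the proposal correctly identifies the target framework and the checklist of conditions, but it omits the Benedicks--Carleson--style partition-and-chopping machinery that is the actual content of the proof.
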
 
\section{Proof of theorem \ref{thm:1d}}\label{proofs} 
\subsection{Strategy of the proof}
We prove Theorem \ref{thm:1d} by showing that the random system $\{T_\omega\}$ admits a Random Young Tower structure  \cite{BBMD, Du} for every $\omega\in\Omega$ with exponential decay for the tail of the return times. Thus, uniform (in $\omega$) rates leads to uniform (in $\omega$) exponential decay of correlations. For convenience, we include the random tower theorem of  \cite{BBMD, Du} in the Appendix, which we will refer to in the proof of Theorem \ref{thm:1d}. Similar to the work of \cite{DO, DHL} in the deterministic  setting, to construct a random tower, we first construct an auxiliary stopping time called escaping time $E:J\to \NN$ on subintervals of $I$. Roughly speaking, $E$  is a moment of time when the image of a small interval $J'\subset J$ reaches a fixed interval length $\delta>0$ while at the same time $T_\omega^E(J')$ does not intersect a fixed neighborhood $\Delta_0$ of the singularity $0$. We show that $T^E_\omega$ has good distortion bounds and that $|\{E>n\}|$ decays exponentially fast as $n$ goes to infinity. The next step will be to construct a full return partition $\Q^\omega$ of some small neighborhood $\Delta^\ast\subset \Delta_0$ of the singularity. Once this is obtained, we induce a random Gibbs-Markov map $F_\omega$ in the sense of Definition 1.2.1 of \cite{Du}. This means we will need to show that  $F_\omega:  \Delta^*\to \Delta^*$, defined as $F_\omega(x) = T_\omega^{\tau_\omega(x)}(x)$, where $\tau_\omega:\Delta^*\to\mathbb N$ a measurable return time function, has the following properties: 
\begin{itemize}
\item $\tau_\omega|Q$ is constant for every $Q\in\Q^\omega$ and every $\omega\in\Omega$;
\item  $F_\omega|Q:Q \to \Delta^\ast $ is a uniformly expanding diffeomorphism with bounded distortion for every $Q\in\Q^\omega$ and every $\omega\in\Omega$; 
\item $ |\{\tau_\omega>n\}| \le Be^{-bn}$ for some constants $B>0$, $b\in(0, 1)$; 
\item  there exists $N_0\in \NN$ and two sequence $\{t_i\in\NN, i=1,2, \dots, N_0\}$, $\{\eps_i>0, i=1, \dots, N_0\}$ such that g.c.d.$\{t_i\}=1$ and $|\{x\in\Delta^\ast\mid \tau_\omega(x)=t_i\}|>\eps_i$ for almost every $\omega\in\Omega$;
\item on sets $\{ \tau_\omega = n \}$, $\tau_\omega$ only depends on the first $n$ elements of $\omega$, i.e. $\omega_0, \dots, \omega_{n-1}$.
\end{itemize}

\subsection{Escape time and partition}\label{ssec:Escape} Let us choose two sufficiently large constants $r_0,r_\ast>0$ satisfying $r_0<r_\ast$. Define $\delta^*=e^{-r_\ast}$ and  $\delta=e^{-r_0}$. Moreover, let  $\Delta_0=(-\delta, \delta)$ and $\Delta^\ast=(-\delta^*, \delta^*)$. Consider an exponential partition $\P=\{I_r\}_{r\in\ZZ}$ of $\Delta_0$ as in \cite{BC}, where   
\[
I_{r}=[e^{-(r+1)}, e^{-r}), \quad  I_{-r}=(-e^{-r}, -e^{-(r+1)}]\quad \text{ for }  r \ge r_0,
\]
and for $|r| < r_0$ we set $I_r=\emptyset$. Furthermore, fix $\vartheta=\left[\frac1\alpha\right]+1$, where $\alpha$ is the same as in \ref{ssec:1drd}.  We divide every $I_r$ into $r^\vartheta$ equal parts. Let $I_{r, m}$ denote one of the small intervals,  $m=1, \dots, r^\vartheta$.
We use the usual notation $x_{\omega,i}=T^i_\omega(x)$, $J_{ \omega, i}=T^i_\omega(J)$ for $i\ge 0$, $x \in I$ and any interval $J\subset I$. 

Let $J_0\subset  I$ be an interval such that either $J_0\cap \Delta_0=\emptyset$ and $|J_0|\ge \delta/5$, or $J_0=\Delta_0$. We wish to construct an \textit{escape partition} $\P^\omega(J_0)$ of $J_0$, and we also wish to construct a \textit{stopping time} function $E_\omega: J_0\to \NN$, which we call the \textit{escape time}, that has the following properties for every $\omega \in \Omega$:
\begin{itemize}
	\item $E_\omega|J$ is constant on each $J \in \P^\omega(J_0)$;
	\item $T_\omega^{E_\omega(J)}(J) \cap \Delta_0 =  \emptyset$ for each $J \in \P^\omega(J_0)$;
	\item $|T_\omega^{E_\omega(J)}(J)| \ge \delta$ for all $J \in \P^\omega(J_0)$;
	\item  for every $J\in \P^\omega(J_0)$ and for every time $j \le E_\omega(J)$ such that $T_\omega^j(J)\cap\Delta_0 \neq \emptyset$, $T_\omega^j(J)$ does not intersect more than three adjacent intervals of the form $I_{r, m}$, $|r|\ge r_0$, $m=1,\dots,  r^\vartheta$.
\end{itemize}

We have  the following proposition. 
\begin{proposition}\label{prop:Escape} 
Let $\delta>0$ be our previously chosen constant, and consider $J_0\subset I$ as described above. For every $\omega\in\Omega$ there exists a countable partition $\P^\omega(J_0)$ and an escape time $E_\omega:J_0\to \mathbb{N}$ that satisfies the properties mentioned above. Furthermore,  there exist constants $C_\delta>0$, $\gamma>0$ such that for any given time $n \in \mathbb{N}$ we have
\[
|\{E_\omega(J)\ge n: J\in\P^\omega(J_0) \}|\le C_\delta e^{-\gamma n}|J_0|.
\]
\end{proposition}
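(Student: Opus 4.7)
The plan is to construct $\P^\omega(J_0)$ and $E_\omega$ by an inductive stopping-time algorithm that mirrors the deterministic construction of \cite{DO, DHL}, then to obtain the exponential tail estimate by combining uniform expansion along escape orbits with a depth-weighted sum over partition refinements, keeping all constants uniform in $\omega \in \Omega$.

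First I would build the partition iteratively. At stage $n=0$ the partition consists of $J_0$ alone. Given the partition at stage $n$, for each unstopped piece $J$ I look at $J_{\omega,n}=T_\omega^n(J)$. If $J_{\omega,n}\cap\Delta_0=\emptyset$ and $|J_{\omega,n}|\ge\delta$, I set $E_\omega(J)=n$ and stop; otherwise I pass to stage $n+1$, first subdividing $J$ in case $J_{\omega,n}\cap\Delta_0\ne\emptyset$ and $J_{\omega,n}$ meets more than three adjacent pieces of $\{I_{r,m}\}$, so that every surviving sub-piece has its $n$-th image meeting at most three adjacent $I_{r,m}$. Any end fragment that already sits outside $\Delta_0$ with length $\ge\delta$ is stopped at time $n$. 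Iterating yields, for each $\omega$, a countable partition $\P^\omega(J_0)$ which by construction satisfies the four bulleted properties.

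The analytic input needed to run this algorithm is bounded distortion for $T_\omega^j$ along every $J\in\P^\omega(J_0)$ and every $j\le E_\omega(J)$. This follows from assumption (A2) and the $\omega$-uniform estimate \eqref{eq:2}, together with the subdivision rule: at each visit to $\Delta_0$ the image lies in at most three adjacent $I_{r,m}$ of total length $\lesssim r^{-\vartheta}e^{-r}$, and the choice $\vartheta=[1/\alpha]+1$ makes $\sum_{r\ge r_0} r^{-\vartheta\alpha}$ finite, which is exactly what is required to telescope the distortion contributions over all pre-escape iterates uniformly in $\omega$ and $n$. Distortion then converts measure bounds on $\{E_\omega\ge n\}\subset J_0$ into length estimates for the corresponding final images under $T_\omega^n$.

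The main obstacle, and the core of the proposition, is the exponential tail bound. I would stratify $\{E_\omega(J)\ge n\}$ according to the finite sequence $(r_1,\ldots,r_k)$ of depths of successive visits of $J_{\omega,\cdot}$ to $\Delta_0$ before time $n$. After a visit at depth $r$, one further application of $T_\omega$ expands the surviving sub-piece to size $\asymp r^{-\vartheta}e^{-r\lambda_0}$ (using $|DT_\omega(x)|\asymp|x|^{\lambda_0-1}$ at depth $r$ and the uniform bounds across $[\lambda_0,\bar\lambda]$), after which the uniform expansion \eqref{eq:unifexp} forces a further $\gtrsim \lambda_0 r/\ell$ iterates before the image can again reach length $\delta$. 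Each visit at depth $r$ therefore both costs linearly many iterates and contributes at most $r^{\vartheta}$ new sub-pieces of measure $\lesssim r^{-\vartheta}e^{-r}$. Summing the geometric series $\sum_r r^{\vartheta}\cdot r^{-\vartheta}e^{-r} = \sum_r e^{-r}$ over all admissible depth sequences with total cost $\ge n$, exactly as in the deterministic setting of \cite{DO, DHL}, gives the desired bound $|\{E_\omega\ge n\}|\le C_\delta e^{-\gamma n}|J_0|$. The genuinely new difficulty compared to the deterministic case is keeping every constant independent of the realization $\omega$; this is ensured by the uniform hypotheses (A1)--(A2) on the family $\{T_\lambda\}_{\lambda\in[\lambda_0,\bar\lambda]}$ together with \eqref{eq:unifexp}, so that the exponent $\gamma$ and the prefactor $C_\delta$ depend only on $\delta,\ell,\alpha,\lambda_0,\bar\lambda$ and not on $\omega$.
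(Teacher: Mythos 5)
Your construction and the overall strategy are essentially the paper's: the same chopping algorithm produces $\P^\omega(J_0)$, visits to $\Delta_0$ are recorded through their depths, and the tail bound is to come from an exponential sum over depth sequences, with uniformity in $\omega$ supplied by (A1)--(A2) and \eqref{eq:unifexp}. The problem is that the pivotal step is justified with the inequality pointing the wrong way. What the tail estimate needs is an \emph{upper} bound on the number of free iterates following a return at depth $r$: one step after such a return the piece has length at least of order $e^{-\bar\lambda r}/r^{\vartheta}$ (the uniform worst case is $\bar\lambda$, not $\lambda_0$), and since lengths never exceed $1$ while \eqref{eq:unifexp} forces growth by a factor $e^{\ell}$ per free iterate, the piece must either escape or return again within roughly $\bar\lambda r/\ell$ iterates. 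This is exactly the paper's estimate $E_\omega(J)\le \frac{2+\ell}{\ell}R_\omega(J)+1$, which converts $E_\omega\ge n$ into a total return depth $R_\omega(J)\gtrsim n$. Your statement that uniform expansion ``forces a further $\gtrsim \lambda_0 r/\ell$ iterates before the image can again reach length $\delta$'' is a \emph{lower} bound on the waiting time, which \eqref{eq:unifexp} cannot deliver (a lower bound on the derivative bounds the waiting time from above, not below) and which would not help anyway: with only a lower bound on inter-return times, a piece could have $E_\omega\ge n$ while $\sum_i r_i$ stays small, and then the gain $e^{-\sum_i r_i}$ per itinerary is not exponentially small in $n$.

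Two further points are compressed to the point of being gaps. First, ``summing over all admissible depth sequences'' requires the combinatorial input that the number of sequences $(r_1,\dots,r_s)$ with $r_i\ge r_0$ and $\sum_i r_i=k$ is at most $e^{\hat\lambda k}$ with $\hat\lambda$ small once $r_0$ is large (the paper quotes \cite[Lemma 3.4]{BLvS} and chooses $\delta$ so that the resulting exponent $1-2\hat\lambda$ is positive); without this, the multiplicity of itineraries can swallow the per-visit factor $e^{-r_i}$. Second, your per-visit measure $\lesssim r^{-\vartheta}e^{-r}$ is a length in the \emph{image} at the return time; to bring it back to $J_0$ you must either apply the distortion lemma multiplicatively along the whole ancestor chain, or argue as in Lemma \ref{lem2} of the paper, which obtains $|J|\le e^{-(1-\hat\lambda)R_\omega(J)}$ by a direct chain-rule computation using \eqref{eq1} and \eqref{eq:unifexp}, with no distortion needed at this stage. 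These defects are repairable and the intended argument is the right one, but as written the proof of the exponential tail is not complete.
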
 
We prove the proposition above in a series of lemmas. It is important to keep distortion under control, which means that we have to keep track of visits of the orbits near the singularity.  To this end, we use a chopping algorithm following \cite{BC}. 

Let us fix $\omega\in \Omega$ and $k\ge 1$, and suppose that the elements $J^* \in \P^\omega(J_0)$ satisfying $E_\omega(J^\ast)<n$ have already been constructed. Let $J \subset J_0$ be an interval in the complement of $\{E_\omega(J^*)<n\}$. If $T^k_\omega(J)\cap \Delta_0=\emptyset$, we call $k$ a \textit{free iterate} for $J$. Now, let us consider the following cases depending on the position and length of  $J_{\omega, k}=T^k_\omega(J)$, $k\in \NN$. \\

\noindent\textbf{Non-chopping intervals.} Suppose that $|J_{\omega, k}|<\delta$, $\Delta_0\cap J_{\omega, k}\neq \emptyset$ and $J_{\omega, k}$ does not intersect more than three adjacent intervals $I_{r, m}$.  Then $k$ is called an {\it inessential} return time for $J$. 
We set the {\it return depth} as $r_\omega=\min\{|r|: J_{\omega, k} \cap I_r \neq \emptyset\}$. Notice that $r_\omega$ depends only on $\omega_0, \dots, \omega_{k-1}$. \\

\noindent\textbf{Chopping times.} Suppose that $|J_{\omega, k }|<\delta$, $\Delta_0\cap J_{\omega, k}\neq \emptyset$ and $J_{\omega, k}$  intersects more than three adjacent intervals $I_{r, m}$. Then $k$ is called an {\it essential} return time for $J$.  In this case we chop $J$ into pieces $J_{\omega, r, j}\subset J$ in such that
\[
I_{r, j}\subset T^k_\omega(J_{\omega, r, j})\subset \hat I_{r, j}, 
\]
where $\hat I_{r, j}$ is the union of $I_{r, j}$ and its two neighbours. In this case, we say that $J_{ \omega, r,j}$ has the associated return depth $r_\omega=\min\{|r|: T^k_\omega(J_{\omega, r, j}) \cap I_r \neq \emptyset\}$, and we refer to $J$ as the \textit{ancestor} of $J_{\omega, r, j}$.  \\

\noindent\textbf{Escape times.} Suppose that $k$ is the first free iterate for $J$ such that $|J_{\omega, k}|\ge \delta$. We then set $E_\omega(J)=k$ and add $J$ to $\P^\omega(J_0)$, where we call $J_{\omega, k}$ an \textit{escape interval}. Notice that the partition will depend on $\omega$.  \\

The proof of Proposition 3.1 closely follows the approach of \cite{DO}. Suppose that $J\in \P^\omega(J_0)$, and suppose that $J$ had $s$ returns before escaping. Let us denote $r_i$ as the return depth of the $i$-th return of $J$. Note that $r_i$ will depend on $\omega$, but for ease of notation this is not denoted explicitly. We define the \textit{total return depth} as $R_\omega(J)=\sum_{i=1}^s r_i$.
\begin{lemma}\label{lem2}
There exists $\hat \lambda < \frac{1}{2} $ depending on $\delta$ such that for any $J\in \P^\omega(J_0)$ 
\[
|J|\le  e^{-(1-\hat\lambda)R_\omega(J)}.
\] 
\end{lemma}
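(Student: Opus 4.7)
My plan is to apply the mean value theorem to the monotone diffeomorphism $T_\omega^{k_s}|_J$, where $k_s$ is the time of the last of the $s\ge 1$ returns before $J$ escapes (the case $s=0$ is trivial as $R_\omega(J)=0$ and $|J|\le 1$). The chopping algorithm ensures $T_\omega^j(J)\not\ni 0$ for every $j\le k_s$, so $T_\omega^{k_s}|_J$ is genuinely a diffeomorphism and I obtain $|J|=|T_\omega^{k_s}(J)|\big/|DT_\omega^{k_s}(\xi)|$ for some $\xi\in J$. The remaining task is to bound the numerator from above and the denominator from below.

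For the numerator, whether the $i$-th return is essential or inessential, $T_\omega^{k_i}(J)$ lies in at most three adjacent cells $I_{r,m}$ with $|r|\ge r_i$ (by definition in the inessential case, by the chopping construction in the essential case), giving $|T_\omega^{k_i}(J)|\le 3e^{-r_i}/r_i^\vartheta$; specialising to $i=s$ controls the numerator. For the denominator, I factor via the chain rule
\[
DT_\omega^{k_s}(\xi)=DT_\omega^{k_1}(\xi)\prod_{i=1}^{s-1}DT^{k_{i+1}-k_i}_{\sigma^{k_i}\omega}\bigl(T_\omega^{k_i}\xi\bigr),
\]
and split each block into the return derivative $DT_{\omega_{k_i}}(T_\omega^{k_i}\xi)$ and the ensuing free-iterate derivative $DT^{k_{i+1}-k_i-1}_{\sigma^{k_i+1}\omega}(T_\omega^{k_i+1}\xi)$. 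Condition (A2) together with $|T_\omega^{k_i}(\xi)|\le e^{-r_i}$ and $\lambda_{\omega_{k_i}}\le\bar\lambda$ yields $|DT_{\omega_{k_i}}(T_\omega^{k_i}\xi)|\ge C^{-1}e^{r_i(1-\bar\lambda)}$, while \eqref{eq:unifexp}, applied to each sub-orbit (which remains outside $\Delta_0$ between returns and therefore avoids the singularity), yields at least $\tilde C_0:=\min(1,\tilde C)$ for every free-iterate block and for the initial $DT_\omega^{k_1}$ factor. Multiplying:
\[
|DT_\omega^{k_s}(\xi)|\ge \tilde C_0^{\,s}\,C^{-(s-1)}\,e^{(1-\bar\lambda)\sum_{i=1}^{s-1}r_i}.
\]

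Combining the two bounds and using $R_\omega(J)-r_s=\sum_{i=1}^{s-1}r_i$,
\[
|J|\le \frac{3}{r_s^\vartheta}\,\tilde C_0^{-s}C^{s-1}\,e^{-(1-\bar\lambda)R_\omega(J)-\bar\lambda r_s}.
\]
Notice that the derivative at the final return $k_s$ is missing from the chain rule (which runs only to $j=k_s-1$), but the extra $e^{-r_s}$ factor inherited from the numerator exactly compensates, restoring the full $R_\omega(J)$ in the exponent. Since each $r_i\ge r_0$, we have $s\le R_\omega(J)/r_0$, so $(C/\tilde C_0)^s\le e^{R_\omega(J)\log(C/\tilde C_0)/r_0}$. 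Discarding $e^{-\bar\lambda r_s}\le 1$ and enlarging $r_0$ so that $3C^{-1}/r_0^\vartheta\le 1$, I obtain
\[
|J|\le e^{-(1-\hat\lambda)R_\omega(J)},\qquad \hat\lambda:=\bar\lambda+\frac{\log(C/\tilde C_0)}{r_0}.
\]
Because $\bar\lambda<1/2$ and $r_0$ is a free parameter of the construction, one final enlargement of $r_0$ secures $\hat\lambda<1/2$. The main obstacle is the bookkeeping subtlety just highlighted (the missing $k_s$-derivative compensated by the numerator's $e^{-r_s}$, plus the absorption of the multiplicative constants into the exponent via the $s\le R_\omega(J)/r_0$ trick); modulo this, the argument is a direct chain-rule calculation combining the singularity estimate with uniform expansion.
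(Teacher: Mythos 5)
Your proposal is correct and follows essentially the same route as the paper: a chain-rule factorisation that pairs the uniform expansion bound on free blocks with the order-of-singularity estimate at return times, and then absorbs the accumulated multiplicative constants into the exponent via $s\le R_\omega(J)/r_0$. The only difference is a bookkeeping choice of endpoint -- the paper applies the mean value theorem all the way to the escape time $E_\omega(J)$ and uses the crude bound $|T_\omega^{E_\omega}(J)|\le 1$ on the numerator, whereas you stop at the last return time $k_s$ and invoke the sharper cell estimate $|T_\omega^{k_s}(J)|\lesssim e^{-r_s}/r_s^\vartheta$, which is precisely why you need the compensating observation that the numerator's $e^{-r_s}$ makes up for the missing return derivative at $k_s$; both routes yield the same exponent and the same $\hat\lambda$.
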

\begin{proof}
Suppose that $R_\omega(J)\neq 0$ and that $J$ has $s$ returns before escaping. Let $d_0$ be the number of iterates before the first return, let $d_1, ..., d_{s-1}$ be the free iterates between successive returns or before escaping for $d_s$. Then $E_\omega(J)=d_0+1+ d_1 + 1 + \dots + d_s +1$. We have 
\[
|T_\omega^{E_\omega}(J)|=  |T_\omega^{d_0+1+ d_1 + 1 +\dots + d_s +1}(J)|=
|DT_\omega^{d_0+1+d_1 + 1 + \dots + d_s +1}(\xi)|\cdot|J|, 
\]
for some $\xi\in J$. Notice that by using the chain rule we obtain the following:
\begin{align*}
	DT_\omega^{(d_0+ 1) + (d_1 + 1) + \dots + (d_s+1)}(\xi) &= D(T_{\sigma^{\nu_{s}}\omega}^{d_s + 1} \circ T_\omega^{\nu_{s}})(\xi)\\ 
	&= DT_{\sigma^{\nu_{s}}\omega}^{d_s + 1}(T_\omega^{\nu_{s}}(\xi)) \cdot DT_\omega^{\nu_{s}}(\xi)\\
	&= DT_{\sigma^{\nu_{s}}\omega}^{d_s + 1}(T_\omega^{\nu_{s}}(\xi)) \cdot D(T_{\sigma^{\nu_{s-1}}\omega}^{d_{s-1} + 1} \circ T_\omega^{\nu_{s-1}})(\xi)\\
	&=DT_{\sigma^{\nu_{s}}\omega}^{d_s + 1}(T_\omega^{\nu_{s}}(\xi)) \cdot DT_{\sigma^{\nu_{s-1}}\omega}^{d_{s-1} + 1} (T_\omega^{\nu_{s-1}}(\xi)) \cdot DT_\omega^{\nu_{s-1}}(\xi)\\
	& \vdots \\
	&=  \prod_{i=0}^{s} DT_{\sigma^{\nu_{i}}\omega}^{d_i + 1}(T_\omega^{\nu_{i}}(\xi))\\
	& =DT_\omega^{\nu_1}(\xi) \prod_{i=1}^{s}  DT_{\sigma^{\nu_i + 1} \omega}^{d_i}(T_{\omega}^{\nu_i + 1}(\xi)) \cdot  DT_{\sigma^{\nu_i} \omega}(T_{\omega}^{\nu_{i}}(\xi) ) 
\end{align*}
where $\nu_i = d_0 + 1 + \dots + d_{i-1} + 1$ is the $i$-th return time for $i = 1, \dots, s$ and $\nu_0 = 0$. Notice that for every $i=0, \dots, s$, every $x\in I$ and every $\omega\in\Omega$, we have $DT^{d_i}_\omega(x)\ge \tilde Ce^{\ell d_i}$ by the expansion property from (\ref{eq:unifexp}). Likewise, since $T_\omega^{\nu_i}(x)$ returns for all $x \in J$, and since we can write $T_\omega^{\nu_i}(x) = e^{-r_x}$ for some $r_x >  r_i $, then $DT_{\sigma^{\nu_i}(\omega)}(T_\omega^{\nu_i}(x))\ge C^{-1} e^{(1-\bar\lambda)r_x} \ge C^{-1} e^{(1-\bar\lambda)r_{i}}$ for all $x \in J$ by the order of singularity condition (\ref{eq1}).
Using these two inequalities, we have 
\[
|DT_\omega^{d_0+1+\dots + d_s+1}(\xi)| \ge \tilde C^{s+1} e^{\ell \sum_{i=0}^sd_i}C^{-s}e^{(1-\bar\lambda)\sum_{i=1}^s r_{i}}
>\hat C^{s} e^{(1-\bar\lambda)R_{\omega}}=e^{(1-\bar\lambda +\frac {s}{R_\omega}\log \hat C)R_{\omega}}.
\]
where $\hat C =\min\{C^{-1}, \tilde C^{(s+1)/s}\}$.  

Recall that $r_{ i} \ge r_0$, where $e^{-r_0}=\delta$. Thus $R_\omega\ge sr_0$, which implies $1-\bar\lambda +\frac{s}{R_\omega}\log \hat C\le 1-\bar\lambda +\frac {\log \hat C}{\log \delta^{-1}}$.  Assume we have chosen $\delta$ small enough such that $\hat \lambda=\bar\lambda-\frac {\log \hat C}{\log\delta^{-1}}<\frac{1}{2}$. 
Thus, we have 
\[
1\ge |T_\omega^{E_\omega}(J)|> e^{(1-\hat\lambda)R_{\omega}}|J|
\]
which finishes the proof. 
\end{proof}
The above lemma allows us to prove the following exponential estimate.

\begin{lemma}\label{lem:32}
 Let $J_0\subset I$ and $R_\omega:J_0\to \NN$ be the sum of return depths for all $\omega\in \Omega$. Then we have 
\begin{equation}
|\{R_\omega(J)=k\mid J\in \P^\omega(J_0)\}|\le 10\delta^{-1}e^{-(1-2\hat\lambda)k}|J_0|.
\end{equation}
\end{lemma}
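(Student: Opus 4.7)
The plan is to bound $\sum_{J: R_\omega(J)=k} |J|$ by parametrising each such $J$ by its essential return itinerary. Every element $J \in \P^\omega(J_0)$ is uniquely encoded by a sequence $(\rho_1, m_1), \ldots, (\rho_t, m_t)$ where $\rho_i \ge r_0$ is the depth of the $i$-th essential return and $m_i \in \{1, \ldots, 2\rho_i^\vartheta\}$ indexes the chosen sub-interval $I_{\pm\rho_i, m_i}$ (the factor $2$ accounting for the side of the singularity). Inessential returns create no branching but still contribute to both $R_\omega(J)$ and the expansion estimate from the proof of Lemma~\ref{lem2}.

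The key mechanism is a cancellation at each essential return of depth $\rho$: the chopping produces at most $2\rho^\vartheta$ sub-intervals, while each $\hat I_{\rho, m}$ has length at most $3 e^{-\rho}/\rho^\vartheta$, so the count factor $\rho^\vartheta$ is exactly cancelled by the $\rho^{-\vartheta}$ from the length. Combining bounded distortion for $T_\omega^{\nu_t}$ on each $J$ (which follows from the chopping algorithm, exactly as in \cite{DO}) with the expansion lower bound $|DT_\omega^{\nu_t}(\xi)| \ge \hat C^t e^{(1-\bar\lambda) R_\omega(J)}$ derived in the proof of Lemma~\ref{lem2}, and summing over $m_t$ with the rest of the itinerary held fixed, I obtain
\[
\sum_{m_t=1}^{2\rho_t^\vartheta} |J| \;\le\; C_d \cdot \frac{6\, e^{-\rho_t}}{\hat C^t\, e^{(1-\bar\lambda) R_\omega(J)}}.
\]
Iterating this cancellation backwards over $m_{t-1}, \ldots, m_1$, and then summing over the admissible essential depth sequences $(\rho_1, \ldots, \rho_t)$ with $\sum \rho_i \le k$, reduces the computation to products of geometric series of the form $\sum_{\rho \ge r_0} e^{-(1-\hat\lambda)\rho}$, each of which is $O(\delta^{1-\hat\lambda})$.

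Provided $\delta$ is chosen small enough (equivalently $r_0 = \log(1/\delta)$ large enough) that any residual combinatorial overhead, together with the inessential return contributions, is absorbed into an $e^{\hat\lambda k}$ factor, this gives the claimed bound $10\delta^{-1} e^{-(1-2\hat\lambda)k}|J_0|$. I expect the main technical obstacle to be the bookkeeping of the inessential returns, whose depths $\sigma \ge r_0$ contribute $e^{-(1-\bar\lambda)\sigma}$ to each $|J|$ without multiplying the branching count; since each inessential return at depth $\sigma$ forces on the order of $\sigma/\ell$ subsequent free iterates by \eqref{eq:unifexp}, these contributions can be summed and absorbed into the overall exponent. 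Because all constants in \eqref{eq:unifexp} and \eqref{eq1} are uniform in $\omega$, the argument proceeds analogously to the deterministic case of \cite{DO}.
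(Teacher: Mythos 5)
Your proposal is correct in spirit but takes a genuinely different route from the paper's. The paper's proof is a one--line combination of three ingredients: (i) the length bound $|J|\le e^{-(1-\hat\lambda)k}$ from Lemma~\ref{lem2}, applied uniformly to every $J$ with $R_\omega(J)=k$; (ii) the cardinality bound $\#\mathcal N_k\le e^{\hat\lambda k}$ for sequences of return depths (essential \emph{and} inessential together) summing to $k$, cited from [BLvS, Lemma~3.4]; and (iii) the observation that any fixed depth sequence corresponds to at most two escaping intervals. Multiplying gives $2e^{\hat\lambda k}e^{-(1-\hat\lambda)k}$ and then $|J_0|\ge\delta/5$ finishes. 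Your approach instead sums lengths directly: you encode by essential return itineraries $(\rho_i,m_i)$ and let the cancellation of the $\rho^\vartheta$ branching factor against the $\rho^{-\vartheta}$ in $|\hat I_{\rho,m}|$ do the work that the paper's step~(iii) does. That is a perfectly legitimate Benedicks--Carleson variant -- indeed it is more self-contained, since the paper's ``at most two escaping intervals per depth sequence'' claim is asserted without proof (deferred to \cite{DO}) -- but it buys that self-containment at the cost of having to handle inessential returns separately, exactly the bookkeeping the paper avoids by letting $\mathcal N_k$ count all return depths at once. One soft spot you should fix: the inequality $|DT_\omega^{\nu_t}(\xi)|\ge \hat C^t e^{(1-\bar\lambda)R_\omega(J)}$ is not what Lemma~\ref{lem2}'s proof supplies at the intermediate time $\nu_t$ -- at that moment only the depths up to $\nu_t$ and the free iterates before $\nu_t$ have contributed, and in any case $R_\omega(J)$ varies with $J$ inside the sum over $m_t$. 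You need the expansion bound in terms of the \emph{partial} return depth accumulated by $\nu_t$ (which is the same for all $J$ sharing the itinerary truncated at $\nu_t$); the cancellation argument then goes through, but as written the displayed inequality is not well-formed.
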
 

\begin{proof} Let $\mathcal N_{k}$ denote the set of all sequences of return depths $(r_1, \dots, r_s)$ with $s \ge 1$ such that $r_1+\dots + r_s=k$. Then from \cite[Lemma 3.4]{BLvS}, we know that for sufficiently large $k$ we have
\[
\#\mathcal N_{k}\le e^{\hat\lambda k}.
\]
Furthermore, we know that for any given sequence of return times $(r_1, \dots, r_s)$, there can be at most two escaping intervals. Thus, combining these and Lemma \ref{lem2} we have
\[
\sum_{\substack{J\in \P^\omega(J_0)\\R_\omega(J)=k}}|J| \le
e^{-(1-\hat\lambda)k} \cdot 2 \#\mathcal N_{k}  \le  2e^{-(1-2\hat\lambda)k}\le 10\delta^{-1}e^{-(1-2\hat \lambda)k}|J_0|,
\]
where in the last inequality we have used $|J_0|\ge \delta/5$.
\end{proof}
The following lemma relates the total return depth and the escape time. 
\begin{lemma}
Let $J\in\P^\omega(J_0)$ and let $\mathcal R = (r_{ 1}, \dots, r_{ s})$ be its associated sequence of return depths. Furthermore, let $R_\omega(J)=\sum_{i=1}^s r_i $. Then we have
\[
E_\omega(J)\le \frac{2+ \ell}{\ell}R_\omega(J) + 1.
\]
\end{lemma}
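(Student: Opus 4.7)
The plan is to use the decomposition from the proof of Lemma \ref{lem2}, namely $E_\omega(J) = d_0 + 1 + d_1 + 1 + \cdots + d_s + 1 = D + s + 1$ where $D = \sum_{i=0}^{s} d_i$, and to bound the free-iterate total $D$ and the return count $s$ separately in terms of $R_\omega(J)$.

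Bounding $s$ is immediate: since every return satisfies $r_i \ge r_0$ and $r_0$ is chosen sufficiently large (in particular $r_0 \ge 1$), we have $R_\omega(J) = \sum_{i=1}^s r_i \ge s\,r_0 \ge s$.

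For $D$, I would bound each $d_i$ by roughly $2r_i/\ell$ using the uniform expansion (\ref{eq:unifexp}). The key observation is that the image $T^{\nu_i}_\omega(J)$ right after a return is of size at least $c\,e^{-r_i}/r_i^\vartheta$ for a fixed constant $c>0$: at an essential return this is enforced by the chopping, since the corresponding tracked piece covers some $I_{r_i,j}$; at an inessential return one tracks the image back to the most recent essential return and uses the cumulative lower bound on the derivative (both the uniform free-iterate contribution and the near-singularity boosts at intervening returns) to propagate the same type of estimate. During the subsequent $d_i$ free iterates the image expands by at least $\tilde C e^{\ell d_i}$, and must remain below $\delta = e^{-r_0}$---otherwise an escape would have occurred before the next event. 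Rearranging $\tilde C\,c\,e^{\ell d_i - r_i}/r_i^\vartheta < \delta$ gives $\ell d_i \le r_i - r_0 + \vartheta\log r_i + O(1)$; for $r_0$ large enough the logarithmic error is dominated by $r_i$, yielding $d_i \le 2 r_i/\ell$ and hence $D \le (2/\ell)\,R_\omega(J)$. The initial segment $d_0$ is handled analogously using $|J_0| \ge \delta/5$ (or $J_0 = \Delta_0$), and contributes only a bounded additive constant that can be absorbed by enlarging $r_0$.

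Combining the two bounds gives $E_\omega(J) = D + s + 1 \le \tfrac{2}{\ell}R_\omega(J) + R_\omega(J) + 1 = \tfrac{2+\ell}{\ell}R_\omega(J) + 1$, as claimed. The main technical obstacle is the lower bound on $|T^{\nu_i}_\omega(J)|$ at inessential returns, since the simple chopping argument applies only at essential ones; the remaining steps are algebraic consequences of the uniform expansion estimate combined with the size constraint $|T^{\nu_i}_\omega(J)| < \delta$ at each return.
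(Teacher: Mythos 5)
Your decomposition $E_\omega(J) = D + s + 1$ with $D = \sum_{i=0}^s d_i$ and the trivial bound $s \le R_\omega(J)$ are fine, and the final arithmetic matches the lemma. However, the per-return estimate $d_i \le 2r_i/\ell$ is not justified, and the obstacle you yourself flag is fatal to the argument as sketched. Your estimate needs a lower bound of the form $|T_\omega^{\nu_i}(J)| \ge c\,e^{-r_i}/r_i^\vartheta$ at \emph{every} return $\nu_i$. At an essential return this is supplied by the chopping construction, since $I_{r,j} \subset T_\omega^{\nu_i}(J_{\omega,r,j})$ forces the image to cover a whole $I_{r,j}$. But the definition of an inessential return gives only an \emph{upper} bound (the image lies in at most three adjacent $I_{r,m}$), not a lower bound. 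Tracking back to the most recent essential return, as you propose, gives a lower bound on $|T_\omega^{\nu_i}(J)|$ proportional to $e^{-\bar\lambda r_j^e}/(r_j^e)^\vartheta$ times the accumulated expansion, where $r_j^e$ is that essential return's depth. That quantity is governed by $r_j^e$ and the elapsed iterates, \emph{not} by the depth $r_i$ at the current inessential return. If, say, the last essential return is at large depth $r_j^e$ and the image next re-enters $\Delta_0$ at shallower depth $r_i < \bar\lambda r_j^e$, then $|T_\omega^{\nu_i}(J)|$ is exponentially smaller than $e^{-r_i}/r_i^\vartheta$, and the inequality $\tilde C c\, e^{\ell d_i - r_i}/r_i^\vartheta < \delta$ you want to rearrange simply isn't available. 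The bound $d_i \le 2r_i/\ell$ therefore cannot be established for inessential returns by this route.

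The paper's proof organizes the telescoping differently precisely to avoid this: it tracks the ancestor $J^{(i-1)}$ only from one essential return $\nu_i^e$ to the next $\nu_{i+1}^e$, where the chopping-provided lower bound $|T_\omega^{\nu_i^e}(J^{(i-1)})| \ge e^{-r_i^e}/(r_i^e)^\vartheta$ is available. It then combines the near-singularity boost $C^{-1}e^{(1-\bar\lambda)r_i^e}$ at $\nu_i^e$ with the crude uniform expansion $\tilde C e^{\ell(d_i^e+\rho_i)}$ over the intervening iterates (needing only $e^{\ell\rho_i} \ge 1$ at the inessential returns, no lower bound on image size there), and the trivial fact that all images have length $\le 1$. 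This yields $d_i^e \le r_i^e/\ell$ and $d_0^e \le r_1^e/\ell$, so $D = \sum d_i^e \le 2R_\omega(J)/\ell$, while the $(s-q)$ inessential returns together with the $(q+1)$ essential/escape iterates contribute $s+1 \le R_\omega(J) + 1$. That between-essential-returns bookkeeping is the missing ingredient; without it your argument does not close.
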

\begin{proof}
Since intervals are not chopped at the inessential return times, we  distinguish between essential and inessential return times. Thus, let $R_\omega(J)=R^e_\omega(J)+R^{ie}_\omega(J)$ be the corresponding splitting into essential and inessential total return depths respectively. Furthermore, let $\mathcal{K}^e= \{\nu_1^e < ... <\nu_q^e \}$ be the ordered set of essential return times, and define $\D^e(J)=d_0^e+\dots + d_q^e$, where $d_0^e$ is the number of free iterates before the first essential return, each $d_i^e $ is the number of free iterates between consecutive essential return times, and $d_q^e$ is the number of free iterates after the last essential return and before the escape time. Likewise, let $\mathcal{K}= \{\nu_1 < ... <\nu_s \}$ and $\D(J)=d_0+\dots +d_s$, where we have already defined the $\nu_i$'s and $d_i$'s in the proof of lemma 3.2. Notice that if $d_{i_j}$ is the number of free iterates between $\nu_{j+1}^e$ and the previous return before $\nu_{j+1}^e$, essential or inessential, for some $i_j = 1, \dots, s-1$, then $d_j^e \ge d_{i_j} $ for all $j=0, \dots, q-1$. But also notice that $\D^e(J)=\D(J)$, thus $q\le s$. 

Recall in the definition of chopping times that we say an interval $\tilde  J \supset J$ is the ancestor of $J$ if $J$ is obtained via chopping of $\tilde J$ at an essential return time. In fact, we can write of sequence of subsets of the form $J = J^{(q)} \subset J^{(q-1)} \subset \dots \subset J^{(1)} \subset J^{(0)}$ such that $J^{(0)}$ is our starting interval and $J^{(i)}$ is obtained by chopping $J^{(i-1)}$ at the $i$-th essential return time. We call $J^{(i)}$ the ancestor of $J$ of order $i$.

Now, let $J^{(i-1)}$ be the ancestor of $J$  of order $i-1$ which is obtained at the essential return time $\nu_{i-1}^e$. We claim that
\begin{align*}
|T_\omega^{\nu_{i}^e}(J^{(i-1)})|\ge 3\frac{e^{-r_i^e}-e^{-(r_i^e + 1)}}{{r_i^e}^{\vartheta}}\ge \frac {e^{-r_i^e}}{{r_i^e}^{\vartheta}},
\end{align*}
where $r_i^e$ is the return depth of the $i$-th essential return. Indeed, the first inequality is true by the definition of essential return. For the second inequality, one can easily show this by calculating the ratio $|I_{r+1}|/|I_r|$, which is just a constant not dependent on $r$, and then taking the geometric sum.

Let $\rho_0$ be the number of inessential returns before the first essential return, let $\rho_i$ denote the number of inessential returns between $\nu_i^e$ and $\nu_{i+1}^e$ for $i=1, \dots, q- 1$, and let $\rho_q$ denote the number of inessential returns after the last essential return and before the escape time.  Now, using the above inequality we have
\begin{align*}
1\ge |T_\omega^{\nu_{i+1}^e}(J^{(i-1)})| 
&= |T_{\sigma^{\nu_{i}^e}\omega}^{d_i^e + \rho_i + 1}(T_\omega^{\nu_{i}^e}(J^{(i-1)}))| \\
&= |T_\omega^{\nu_{i}^e}(J^{(i-1)})|\cdot |DT_{\sigma^{\nu_{i}^e}\omega }^{d_i^e + \rho_i +  1}(T_{\omega}^{\nu_i^e}(\xi))| \\
&= |T_\omega^{\nu_{i}^e}(J^{(i-1)})|\cdot |DT_{\sigma^{\nu_{i}^e + 1}\omega }^{d_i^e + \rho_i }(T_{\omega}^{\nu_i^e + 1}(\xi))| \cdot |DT_{\sigma^{\nu_{i}^e}\omega }(T_{\omega}^{\nu_i^e}(\xi))|  \\
&\ge \frac {e^{-r_i^e}}{{r_i^e}^{\vartheta}} \cdot \tilde C e^{\ell (d^e_i + \rho_i)} \cdot C^{-1} e^{(1-\bar\lambda)r_i^e}\\
& \ge \tilde C  C^{-1}\frac{e^{(1-\bar\lambda)r_i^e}}{{r_i^e}^{\vartheta }}  e^{\ell d^e_i - r_i^e},
\end{align*}
for $i=1, \dots, q$, where we just set $\nu_{q+1}^e= E_\omega(J)$, and where we use $e^{\ell \rho_i} \ge 1$. Now, if $r_0$ is sufficiently large, then $\frac{e^{(1-\bar\lambda)r_i^e}}{{r_i^e}^{\vartheta - 1}} \ge 1$. If $\tilde C  C^{-1} < 1$, then we can simply choose an even larger $r_0$ to cancel these terms out. Otherwise, if $\tilde C  C^{-1} \ge 1$, then we can simply remove these terms. Thus, we obtain $1 \ge e^{\ell d^e_i - r_i^e},$ and taking the log of both sides we obtain $0 \ge \ell d_i^e - r_i^e$, and thus $d_i^e \le \frac{r_i^e}{\ell}$ for $i=1, \dots, q$. Similarly for $d_0^e$, we know that 
\begin{align*}
1 \ge |T_\omega^{\nu_1^e+1}(J^{(0)})| &= |T_\omega^{\nu_1^e}(J^{(0)})| \cdot |DT_{\sigma^{\nu_1^e} \omega}(T_\omega^{\nu_1^e}(\xi))|\cdot |DT_\omega^{\nu_1^e}(\xi)|  \\
	&  \ge C^{-1} \tilde C \frac {e^{-r_1^e}}{{r_1^e}^{\vartheta}}  e^{(1 - \bar \lambda)r_1^e}  e^{\ell(d_0^e + \rho_0 + 1)}.
\end{align*}
Using a similar argument to the previous one above, we obtain $d_0^e \le \frac{r_1^e}{\ell}$.

Note by assumption that there must be $(s-q)$ inessential returns and that there must be $(q+1)$ iterates when a return or the escape happens. Thus, using the fact that $R_\omega(J) \ge s$ and $R_\omega^e (J) \le R_\omega(J)$ we obtain 
\begin{align*}
	E_\omega(J)&= d_0^e + \sum_{i=1}^q d_i^e+(s-q) + (q + 1)\\
	&\le \frac{r_1^e}{\ell} + \sum_{i = 1}^q \frac{r_i^e}{\ell} + R_\omega(J) + 1\\
	&= \frac{r_1^e}{\ell}  + \frac{R_\omega^e(J)}{\ell} + R_\omega(J) + 1\\
	&\le \frac{2+ \ell}{\ell}R_\omega(J) + 1.
\end{align*}
\end{proof} 
Finally, we are ready to prove Proposition \ref{prop:Escape}. 

\begin{proof}[Proof of Proposition \ref{prop:Escape}]
Let us fix some $n \in \mathbb{N}$. By the above lemma, for every $J$ in $\P^\omega(J_0)$ such that $n\le E_\omega(J)$, we therefore have that $n\le  \frac{2+ \ell}{\ell}R_\omega(J) + 1$.  Thus, $R_\omega\ge \frac{(n-1)\ell}{2+\ell}$. Combining this with lemma \ref{lem:32} we have that 
\[
\sum_{\substack{J\in\P^\omega(J_0)\\E_\omega(J)\ge n}}|J|\le\sum_{\substack{J\in\P^\omega(J_0)\\R_\omega\ge \frac{(n-1)\ell}{2+\ell}}}|J|\le
\sum_{j=\frac{(n-1)\ell}{2+\ell}}^\infty 10\delta^{-1}e^{-(1-2\hat\lambda)j}|J_0|\le C_\delta e^{-(1-2\hat\lambda)\frac{(n-1)\ell}{2+\ell}}|J_0|=C_\delta e^{-\gamma n}|J_0|.
\]
Notice that we need to fix $\delta>0$, since $C_\delta=\mathcal O(\delta^{-1}).$ 

Thus, $\P^\omega(J_0)$ defines a partition of $J_0$ and every element of the partition is assigned an escape time.  Notice that the way we constructed the escape time immediately implies that for every $J\in \P^\omega(J_0)$ and time $j\le E_\omega(J)$ such that $T_\omega^j(J)\cap\Delta_0\neq \emptyset$, the image $T_\omega^j(J)$ does not intersect more than three adjacent intervals of the form $I_{r, m}$, $|r|\ge r_0$, $m=1,\dots,  r^\vartheta$.
\end{proof}

\subsection{Bounded Distortion}In this subsection we prove that $T^n_\omega$ has bounded distortion on every interval $J$ such that $T_\omega^k(J)$ does not intersect more than three adjacent intervals $I_{r, m}$ for all $1\le k\le n$. Therefore, the escape map in Proposition \ref{prop:Escape} has bounded distortion. 


We prove the following lemma:
\begin{lemma}\label{lem:bdsm}
For some fixed $\omega \in \Omega$, consider an interval $J \subset I$ such that $|J| < \delta$ and for which there exists $n_J \in \mathbb{N}$ such that for every $0 \leq k \leq n_J - 1$ either $J_{\omega,k} \cap \Delta_0 = \emptyset$ or $J_{\omega, k}$ is contained in at most three intervals $I_{r,m}$. There exists a constant $\mathcal{D} = \mathcal{D}(\delta)$ (independent of $\omega$) such that for every $\omega$ and for every such $J$ described above, we have
	\begin{align}
		\max_{x,y \in J} \log \frac{|DT_\omega^k(x)|}{|DT_\omega^k(y)|} \leq \mathcal{D}, \quad 1 \leq k \leq n_J.
	\end{align}
\begin{proof}
Let $\omega\in\Omega$. If we set $x_{\omega, i} = T_{\omega}^i(x)$, then by using the chain rule  
	and  $\log(1+x)\le x$ for $x\ge 0$ we obtain 
	\begin{align}
		\log \frac{|DT_\omega^k(x)|}{|DT_\omega^k(y)|} &= \log \prod_{i = 0}^{k-1}\frac{ |DT_{\sigma^i \omega}(x_{\omega, i})|}{ |DT_{\sigma^i \omega}(y_{\omega, i})|} 
		\nonumber \\
		& \quad \leq \sum_{i=0}^{k-1} \frac{ |DT_{\sigma^i \omega}(x_{\sigma^i\omega, i})-DT_{\sigma^i \omega}(y_{\omega, i})|}{ |DT_{\sigma^i\omega}(y_{\omega, i})|}. \label{eq4}
	\end{align}
	One can show that
	\begin{align*}
		|DT_{\sigma^i \omega}(x_{\omega, i})-DT_{\sigma^i \omega}(y_{\omega, i})| \le K \frac{|x_{\omega, i}-y_{\omega, i}|^{\alpha}}  {|x_{\omega, i}|^{1 - \lambda_{\sigma^i \omega} }|y_{\omega, i}|^{1 - \lambda_{\sigma^i \omega} } },
	\end{align*}
	where $\lambda_{\sigma^i \omega} \in [\lambda_0, \bar \lambda]$ is the order of singularity associated with the map $T_{\sigma^i \omega}$. Furthermore, we know $|DT_{\sigma^i \omega}(y_{\omega, i})| \geq C^{-1} |y_{\omega, i}|^{\lambda_{\sigma^i \omega} - 1}$, and we also know that $\frac{1}{|x|^{1 - \lambda}} \le \frac{1}{|x|^\alpha}$ for any $x \in I$ because $\alpha \ge 1 - \lambda$ for any $\lambda \in [\lambda_0, \bar \lambda]$. Thus, combining these and setting $\kappa =  K C $, we obtain
	\begin{align}
		\log \frac{|DT_\omega^k(x)|}{|DT_\omega^k(y)|} &\le \sum_{i=0}^{k-1} K \frac{|x_{\omega, i}-y_{\omega, i}|^{\alpha }}  {|x_{\omega, i}|^{1 - \lambda_{\sigma^i \omega}  }|y_{\omega, i}|^{1 - \lambda_{\sigma^i \omega}  } } \cdot \frac{1}{C^{-1} |y_{\omega, i}|^{-(1 - \lambda_{\sigma^i \omega} )} }\nonumber \le  \kappa \sum_{i=0}^{k-1}  \frac{|x_{\omega, i}-y_{\omega, i}|^{\alpha }}  {|x_{\omega, i}|^{\alpha } }.
	\end{align}

	Let $J =[x,y] \subset I$, and let $\mathcal{K}= \{\nu_1 < \nu_2 < ... < \nu_i < ... <\nu_p \}$ be the ordered set of all returns under the dynamics of $T_\omega^k$ on $J$ from $k=0$ to $k=n_J -1$. Note that since we have already assumed there are no essential returns at or before before $n_J-1$, all of these returns must be inessential. The largest possible size for $J_{ \omega, \nu_i}=T_\omega^{ \nu_i }(J)$ is if it is contained in one interval of the form $I_{r_i, m}$ and two of the form $I_{(r_i -1), m}$, thus
	\begin{align}
		|J_{ \omega, \nu_i}| &\leq 2 \frac{|e^{-(r_i - 1)} - e^{-r_i } | }{(r_i -1)^\vartheta} + \frac{|e^{-r_i} - e^{-(r_i + 1)} | }{r_i^\vartheta} \nonumber \\
		&\leq \frac{1}{(r_i -1)^\vartheta} \big(2|e^{-r_i}(e - 1)| +|e^{-r_i}(1 - e^{-1})| \big) \nonumber \\
		&= c_{r_i} \frac{e^{-r_i}}{r_i^\vartheta} \label{eq5}
	\end{align}
	with $c_{r_i}=  \frac{r_i^\vartheta}{(r_i -1)^\vartheta} \big(2e - 1 - e^{-1} \big)$.

	Let us examine time $k$. There are two possibilities: $k \leq \nu_p + 1$ or $\nu_p + 1< k \leq n_J - 1$. We will start with the case that $k \leq \nu_p + 1$. Let $\nu_{s(k)}$ denote the last return time before $k$, and let us split the sum in (\ref{eq4}) accordingly:
	\begin{align}\label{eq:diff}
		\sum_{i=0}^{k-1} \frac{|x_{\omega, i}-y_{\omega, i}|^{\alpha}}  {|x_{\omega, i}|^{\alpha} } = \sum_{i=0, i \notin \mathcal{K}}^{\nu_{s(k)}-1}  \frac{|x_{\omega, i}-y_{\omega, i}|^{\alpha}}  {|x_{\omega, i}|^{\alpha} } &+ \sum_{i=\nu_{s(k)}+1}^{k-1}  \frac{|x_{\omega, i}-y_{\omega, i}|^{\alpha}}  {|x_{\omega, i}|^{\alpha} }  \nonumber \\ &+  \quad \sum_{i=1}^{s(k)}  \frac{|x_{\omega, \nu_i}-y_{\omega,  \nu_i}|^{\alpha}}  {|x_{\omega,  \nu_i}|^{\alpha} }
	\end{align}
	where the first sum is all times up to time $\nu_{s(k)}$ but excluding return times, the second sum is the times after $\nu_{s(k)}$, and the third sum is the return times.
	
	For the first sum, we use the fact that our expansion condition implies that $|J_{\omega, \nu_{s(k)-i}}| \leq \tilde{C}^{-1} e^{-i\ell }|J_{\omega, \nu_{s(k)}}|$ and that $|x_{\omega, \nu_{s(k)} - i}|\geq \delta$ for $i = 1, ..., \nu_{s(k)}$, $(\nu_{s(k)} - i) \notin \mathcal{K}$  to show that
	\begin{align}
		\frac{|x_{\omega,{\nu_{s(k)-i}}}-y_{\omega, {\nu_{s(k)-i}}}|}  {|x_{\omega, {\nu_{s(k)-i}}}|} 
		\leq \tilde{C}^{-1} e^{-\ell i}\frac{|J_{\omega, \nu_{s(k)}}|}{\delta} 
		\le \tilde{C}^{-1} e^{-\ell i} c_{r_{s(k)}} \frac{e^{-r_{s(k)}}}{r_{s(k)}^\vartheta \delta}. \label{eq6}
	\end{align}
	Thus, we have
	\begin{align*}
		 \sum_{i=0, i \notin \mathcal{K}}^{\nu_{s(k)}-1}  \frac{|x_{\omega, i}-y_{\omega, i}|^{\alpha}}  {|x_{\omega, i}|^{\alpha} } 
		& \leq \sum_{\substack{i=1\\(\nu_{s(k)} - i) \notin \mathcal{K} }}^{\nu_{s(k)}}  \tilde{C}^{-\alpha} e^{-\ell (\nu_{s(k)}-i)\alpha}c_{r_{s(k)}}^{\alpha}\frac{e^{-r_{s(k)}\alpha}}{r_{s(k)}^{\vartheta {\alpha}} \delta^{\alpha}} && \Big(\text{using (\ref{eq6})} \Big)  \\
		& \leq C_0 C_1  \frac{c_{r_{s(k)}}^{\alpha}}{r_{s(k)}^{\vartheta {\alpha}} } \sum_{\substack{i=1\\(\nu_{s(k)} - i) \notin \mathcal{K} }}^{\nu_{s(k)}}  e^{-\ell (\nu_{s(k)}-i)\alpha }   \\
		& \leq C_0 C_1   \sum_{i=0}^{\infty} e^{- \ell i \alpha} && \Big(\text{using } \frac{c_{r_{s(k)}}}{r_{s(k)}^{\vartheta } } <1 \Big)    \\
		&=  C_0 \cdot \frac{1}{1 - e^{- \ell \alpha}} = D_1. && \Big(\text{using } \frac{e^{-r_{s(k)}}}{\delta} <1 \Big)   
	\end{align*}
	with $C_0=   \tilde{C}^{-\alpha} $ and $C_1= \Big( \frac{e^{-r_{s(k)}}}{\delta}  \Big)^\alpha$. We note that $D_1$ is independent of both $k$ and $\delta$.

For the second sum, we also have that $|x_{\omega, i}|\geq \delta$ and $|J_{\omega, i}| \leq \tilde{C}^{-1} e^{-\ell (k - 1 - i)}|J_{\omega, k-1}|$ for $i = \nu_{s(k)} + 1, .., k-1.$ Thus
\begin{align*}
\sum_{i=\nu_{s(k)}+1}^{k-1}  \frac{|x_{\omega, i}-y_{\omega, i}|^{\alpha}}  {|x_{\omega, i}|^{\alpha} } &\leq  \sum_{i=\nu_{s(k)}+1}^{k-1}  \tilde{C}^{-\alpha}e^{-\ell(k -1 -i)\alpha}\frac{|J_{\omega,k-1}|^{\alpha}} {\delta^{\alpha}}\\
&  \leq C_0 \sum_{i=\nu_{s(k)}+1}^{k-1}   e^{-\ell(k -1 -i)\alpha}\\ 
&\leq C_0 \sum_{i=0}^{\infty}  e^{- \ell i \alpha}\\
& = C_0 \cdot \frac{1}{1 - e^{-\ell \alpha}} = D_2 < \infty.
	\end{align*}
	
	Finally, for the third sum, we define the set  $\mathcal{K}_r= \{\nu_{\eta_1} < \nu_{\eta_2} < ... < \nu_{\eta_i} < ... <\nu_{\eta_q} \}$ such that for the associated return depths we have $r_{\eta_i}=r$ for all $\nu_{\eta_i}  \in  \mathcal{K}_r$. Clearly
	\begin{align}
		\sum_{i=1}^{s(k)} \frac{ |x_{\omega, \nu_i} - y_{\omega, \nu_i}|^{\alpha}}{|x_{\omega, \nu_i}|^{\alpha}} = \sum_{r \geq r_0} \sum_{i \in \mathcal{K}_r}\frac{ |x_{\omega ,i} - y_{\omega ,i}|^{\alpha}}{|x_{\omega ,i}|^{\alpha}}. \label{eq7}
	\end{align}
	Let $M(r)$ denote the maximum value in $\mathcal{K}_r$. Then we have $|J_{\omega,i}| \leq \tilde{C}^{-1} e^{-\ell (M(r) - i)} |J_{\omega,M(r)}|$ and $|J_{\omega,M(r)}|< c_{M(r)}\frac{e^{-r}}{r^\vartheta}$. Additionally, since $i \in \mathcal{K}_r$ implies $e^{-r}\leq |x_{\omega ,i}| $, we have
	\begin{align}
		\frac{|J_{\omega,i}|}{|x_{\omega,i}|} \leq c_{M(r)} \tilde{C}^{-1} \frac{e^{-\ell(M(r)-i)}}{r^\vartheta}. \label{eq8}
	\end{align}
	Thus, we have 
	\begin{align*}
		\sum_{i \in \mathcal{K}_r} \frac{ |x_{\omega ,i} - y_{\omega ,i}|^{\alpha}}{|x_{\omega ,i}|^{\alpha}} &\leq \sum_{i \in \mathcal{K}_r} c_{M(r)}^{\alpha} \tilde{C}^{-\alpha} \frac{e^{-\ell(M(r)-i)\alpha}}{r^{\vartheta \alpha}} && \Big( \text{using (\ref{eq8})} \Big) \\
		& \leq \frac{C_0 c_{M(r)}^{\alpha}}{r^{\vartheta \alpha}}   \sum_{i \in \mathcal{K}_r}  e^{-\ell(M(r)-i)\alpha} \\
		 &\leq \frac{C_0 c_{M(r)}^{\alpha}}{r^{\vartheta  \alpha}}   \sum_{j=0}^\infty  e^{-\ell j \alpha} \\
		& = \frac{C_0 c_{M(r)}^{\alpha}}{r^{\vartheta  \alpha}}   \frac{1}{1 - e^{-\ell \alpha}}
		 \leq \frac{C_2}{r^{\vartheta  \alpha}}  	
	\end{align*}
	with $C_2= C_0 c_{M(r)}^{\alpha}  \frac{1}{1 - e^{-\ell \alpha}}$. Thus,
	\begin{align*}
		\sum_{i=1}^{s(k)} \frac{ |x_{\omega , \nu_i} - y_{\omega , \nu_i}|^{\alpha}}{|x_{\omega , \nu_i}|^{\alpha}} &=\sum_{r \geq r_0} \sum_{i \in \mathcal{K}_r}\frac{ |x_{\omega ,i} - y_{\omega ,i}|^{\alpha}}{|x_{\omega ,i}|^{\alpha}} \\
		& \leq C_2 \sum_{r \geq r_0}\frac{1}{r^{\vartheta  \alpha}}  = D_3 < \infty.
	\end{align*}
	This concludes the part of the proof for $k \leq \nu_p + 1$.
	
	If we instead have $\nu_p + 1 < k$, then the above proof will not work. This is because, after $\nu_p$, the upper bound of $|J_{\omega,i}| < \delta$ no longer applies so long as the $i$-th iterate does not intersect $\Delta_0$. Instead, we use the fact that, since $J_{\omega,i}$ does not intersect $\Delta_0$, we have $|J_{\omega,i}| < 1 - \delta$ to give us the following estimate:
	\begin{align*}
		 \sum_{i=\nu_p+1}^{k-1}  \frac{|x_{\omega, i}-y_{\omega, i}|^{\alpha}}  {|x_{\omega, i}|^{\alpha} } &\leq  \sum_{i=\nu_p+1}^{k-1}  \tilde{C}^{-\alpha}e^{-\ell (k -1 -i)\alpha} \frac{|J_{k-1}|^{\alpha}} {\delta^{\alpha}} \\
		& < C_0 \sum_{i=\nu_{s(k)}+1}^{k-1}   e^{-\ell (k -1 -i)\alpha}\Big(\frac{1 - \delta} {\delta}\Big)^{\alpha}\\
		&\leq C_0 \Big(\frac{1 - \delta} {\delta}\Big)^{\alpha} \sum_{i=\nu_{s(k)}+1}^{k-1}   e^{-\ell (k -1 -i)\alpha} \Big( \text{since }\Big(\frac{1 - \delta} {\delta}\Big) > 1 \Big) \\
		&= C_0 \Big(\frac{1 - \delta} {\delta}\Big)^{\alpha} \cdot \frac{1}{1 - e^{-\ell  \alpha}} 
		= D_2(\delta) < \infty,
	\end{align*}
	where $D_2(\delta)$ is a function dependent on $\delta$. Thus, we set our distortion constant $\mathcal{D}(\delta) = \kappa (D_1 + D_2(\delta) + D_3)$.
\end{proof}
\end{lemma}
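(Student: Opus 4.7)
The plan is to reduce the distortion bound to controlling a single sum $\sum_{i=0}^{k-1} |J_{\omega,i}|^\alpha/|x_{\omega,i}|^\alpha$ and then to exploit uniform expansion together with the refinement of the partition $\{I_{r,m}\}$ to make this sum convergent. Starting from $\log \prod = \sum \log$ and $\log(1+t) \le t$, I would write
\[
\log\frac{|DT_\omega^k(x)|}{|DT_\omega^k(y)|} \le \sum_{i=0}^{k-1} \frac{|DT_{\sigma^i\omega}(x_{\omega,i}) - DT_{\sigma^i\omega}(y_{\omega,i})|}{|DT_{\sigma^i\omega}(y_{\omega,i})|},
\]
then plug the locally H\"older estimate \eqref{eq2b} (available under the standing hypothesis $\alpha \ge 1-\bar\lambda$) into the numerator and the singularity lower bound $|DT_{\sigma^i\omega}(y_{\omega,i})| \ge C^{-1}|y_{\omega,i}|^{\lambda-1}$ into the denominator. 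The factor $|y_{\omega,i}|^{1-\lambda}$ then cancels against the same factor in the numerator (using again $1-\lambda \le \alpha$), yielding a bound of the form $\kappa \sum_{i=0}^{k-1} |J_{\omega,i}|^\alpha/|x_{\omega,i}|^\alpha$ with $\kappa$ independent of $\omega$.

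Next, I would organise this sum around the returns of $J$ to $\Delta_0$. Let $\mathcal K = \{\nu_1 < \dots < \nu_p\}$ be the return times before $k$; by hypothesis each is inessential, so $J_{\omega,\nu_i}$ sits in at most three adjacent $I_{r_i, m}$, giving $|J_{\omega,\nu_i}| \le c\, e^{-r_i}/r_i^\vartheta$ for a uniform $c$, while $|x_{\omega,\nu_i}| \ge e^{-r_i}$. At non-return times the orbit stays outside $\Delta_0$, so $|x_{\omega,i}| \ge \delta$. The uniform expansion hypothesis \eqref{eq:unifexp} supplies the backward contraction estimate $|J_{\omega,i}| \le \tilde C^{-1} e^{-\ell(j-i)}|J_{\omega,j}|$ for $i \le j$, which I would use to propagate the small size of $|J_{\omega,j}|$ at a controlled time $j$ back to earlier times.

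I would then split the main sum into three blocks: (i) non-return times strictly before the last return $\nu_{s(k)}$ preceding $k$, estimated by backward contraction toward the succeeding return combined with $|x_{\omega,i}| \ge \delta$; (ii) the return times themselves; (iii) non-return times after $\nu_{s(k)}$, handled by backward contraction toward time $k$. Blocks (i) and (iii) yield convergent geometric series in $e^{-\ell\alpha i}$. The main obstacle is block (ii), since there the denominator $|x_{\omega,\nu_i}|^\alpha$ can be arbitrarily small; the key idea is to group returns by their common depth $r$, backward-contract all such returns to the latest one $M(r)$, and use $|J_{\omega,\nu_i}|/|x_{\omega,\nu_i}| \le c/r^\vartheta$ to obtain a per-depth bound of order $r^{-\vartheta\alpha}$. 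The outer sum $\sum_{r \ge r_0} r^{-\vartheta\alpha}$ then converges precisely because the choice $\vartheta = [1/\alpha]+1$ forces $\vartheta\alpha > 1$. Finally, if $k > \nu_p + 1$ the bound $|J_{\omega,i}| < \delta$ fails in the post-return tail; I would replace it with the crude $|J_{\omega,i}| \le 1-\delta$, and this is what introduces the $\delta$-dependence of the final distortion constant $\mathcal{D}(\delta)$.
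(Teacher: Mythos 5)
Your proposal follows essentially the same route as the paper's proof: the same reduction via the chain rule, $\log(1+t)\le t$, the locally H\"older estimate and the singularity lower bound to reach $\kappa\sum_i |J_{\omega,i}|^\alpha/|x_{\omega,i}|^\alpha$; the same three-block split around the (inessential) returns with backward contraction from uniform expansion; the same grouping of returns by depth $r$ with contraction to the last return $M(r)$ and convergence of $\sum_r r^{-\vartheta\alpha}$ from $\vartheta\alpha>1$; and the same $1-\delta$ replacement after the last return, which is exactly where the $\delta$-dependence of $\mathcal D(\delta)$ enters. The argument is correct and matches the paper's proof in all essential steps.
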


To summarize the last two sections, we have now shown that for a chosen $\delta > 0$ we can construct an escape partition and an escape time function $E_\omega$, the tails for which decay exponentially. Furthermore, since $|J|< \delta$ for every $J\in \P^\omega(J_0)$, then the above bounded distortion result holds on each $J\in \P^\omega(J_0)$.

\subsection{Full return partition}\label{ssec:FR}
Below we construct the full return partition of some neighborhood of $0$. For this we use the following lemma:

\begin{lemma}\label{lem:FR} Let $\delta >0$ be our previously chosen constant. Then there exists some sufficiently small $\delta^\ast > 0$ and some $t^\ast\in \NN$ such that for every $\omega \in \Omega$ and for every interval $J$ with $|J|\ge\delta$ and $J \cap \Delta_0 = \emptyset$ there exists $\tilde J \subset J$ with the following properties: 
\begin{itemize}
\item[(i)] There exists $t < t^*$ such that  $T_\omega^{t}: \tilde J \to  \Delta^*=(-\delta^\ast, \delta^\ast)$ is a diffeomorphism;
\item[(ii)] both components of $J \backslash \tilde J$ have size greater than $\delta/5$; 
\item[(iii)]$|\tilde J| \ge  \beta|J|$, where $\beta$ is a uniform constant.
\end{itemize}
\end{lemma}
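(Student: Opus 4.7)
My plan is to establish Lemma~\ref{lem:FR} first for the unperturbed map $T_0$ using transitivity, and then to transfer the result to the random compositions $T_\omega^n$ using the $C^{1+\alpha}$-continuity of the family at $\lambda_0$.

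\textbf{Step 1: uniform return time for $T_0$.} By assumption (A3), $\bigcup_{n\ge 1}T_0^{-n}(0)$ is dense in $I$. I claim there exists $t_0\in\NN$ such that every interval $J\subset I\setminus\Delta_0$ with $|J|\ge\delta$ contains a point $x_0$ with $T_0^{n_0}(x_0)=0$ for some $1\le n_0\le t_0$, with $x_0$ at distance at least $2\delta/5$ from each endpoint of $J$, and with $T_0^{j}(x_0)\ne 0$ for $0\le j<n_0$. Indeed, the open sets
\[
O_N=\big\{x\in I: B(x,\delta/10)\cap T_0^{-n}(0)\ne\emptyset \text{ for some } 1\le n\le N\big\}
\]
increase to $I$ by density of pre-images of $0$, so by compactness of $I$ there is some $t_0$ with $O_{t_0}=I$. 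Applying this to the midpoint of $J$ produces the desired $x_0$ within distance $\delta/10$ of the midpoint, hence inside $J$ at the required distance from the endpoints. We take $n_0$ minimal.

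\textbf{Step 2: constructing $\tilde J_0$ for $T_0$.} Define $\tilde J_0$ as the component of $(T_0^{n_0})^{-1}(\Delta^\ast)$ containing $x_0$. Since the orbit $x_0,\ldots,T_0^{n_0-1}(x_0)$ avoids $0$, the map $T_0^{n_0}$ is $C^1$ on a fixed neighborhood $V_{x_0}$ of $x_0$; for $\delta^\ast$ small enough, $\tilde J_0\subset V_{x_0}$ and $T_0^{n_0}|_{\tilde J_0}\colon \tilde J_0\to\Delta^\ast$ is a diffeomorphism. The uniform expansion (A1) yields $|\tilde J_0|\le 2\delta^\ast/(\tilde C e^{\ell})$, which we arrange to be less than $\delta/5$ by choice of $\delta^\ast$, ensuring (ii). For (iii), only finitely many pre-images $x_0$ of $0$ arise with $n_0\le t_0$, and for each, $M_{x_0}:=\sup_{V_{x_0}}|DT_0^{n_0}|<\infty$; setting $M^\ast:=\max_{x_0}M_{x_0}$, we get $|\tilde J_0|\ge 2\delta^\ast/M^\ast$, and since $|J|\le 1$ this gives $|\tilde J_0|\ge(2\delta^\ast/M^\ast)|J|=:\beta|J|$.

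\textbf{Step 3: transfer to $T_\omega$.} The $C^{1+\alpha}$-continuity of the family $\{T_\lambda\}_{\lambda\in[\lambda_0,\bar\lambda]}$ at $\lambda_0$ (Definition~\ref{def:cont}), together with the fact that $\bar\lambda-\lambda_0$ may be taken as small as we wish (Section~\ref{ssec:1drd}), implies that for each $n\le t_0$ and each $(\omega_0,\ldots,\omega_{n-1})\in[\lambda_0,\bar\lambda]^n$, the composition $T_\omega^n$ is uniformly $C^1$-close to $T_0^n$ on compact subsets of $V_{x_0}$ avoiding the forward orbit of $0$ up to time $n$. In particular, the orbit of a nearby perturbed point still avoids $0$, and the intermediate value theorem produces $x_0^\omega\in V_{x_0}\subset J$ with $T_\omega^{n_0}(x_0^\omega)=0$. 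Defining $\tilde J$ as the component of $(T_\omega^{n_0})^{-1}(\Delta^\ast)$ containing $x_0^\omega$ realises (i), and properties (ii) and (iii) persist with slightly reduced $\beta$ by $C^1$-closeness.

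\textbf{Main obstacle.} The principal difficulty is securing uniformity of $t^\ast$, $\delta^\ast$, and $\beta$ simultaneously over the continuum of intervals $J$ \emph{and} the uncountable space $\Omega$. Uniformity in $J$ rests on the compactness argument of Step~1 (exploiting (A3)), while uniformity in $\omega$ reduces to $C^{1+\alpha}$-closeness of $T_\omega^n$ to $T_0^n$ for $n\le t_0$, which can always be arranged by shrinking $\bar\lambda-\lambda_0$.
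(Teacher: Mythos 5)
Your proposal is correct and follows essentially the same route as the paper: use (A3) to find preimages of $0$ inside the middle of $J$ within a uniformly bounded time (the paper asserts $\bar\delta$-density of $\bigcup_{n\le t_{\bar\delta}}(T_\omega^n)^{-1}(0)$ directly for all $\omega$ by closeness to $T_0$, while you derive the uniform time for $T_0$ by compactness and then transfer to $T_\omega^n$ by $C^1$-closeness and the intermediate value theorem), then shrink $\delta^\ast$ so the pullback component of $\Delta^\ast$ is short, giving (ii), and bound the derivative on the pullback from above to get the uniform proportion $\beta$ in (iii). Your Step 3 is in fact a more explicit justification of the uniformity in $\omega$ that the paper only sketches.
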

\begin{proof}
By assumption (A3) of the unperturbed map, we know that $\bigcup_{n\in\NN}T_0^{-n}(0)$ is dense in $I$. Since all the maps in $\A(\eps)$ are close to $T_0$, for any small constant $\bar\delta >0$ there exists an iterate $t_{\bar\delta}\in\NN$ such that  $\bigcup_{n\le t_{\bar\delta}}(T_\omega^{n})^{-1}(0)$ is $\bar\delta$-dense in $I$  and uniformly bounded away from $0$ for all $\omega$. Let us fix some $0<\bar\delta< \delta/5$, and thereby we also fix some $t_{\bar \delta}$. Then the following holds:
\begin{itemize}
	\item[(a)] if we take the subinterval in the middle of $J$ with length $\bar \delta$, then there is a point $x_*\in J$ which is a preimage of $0$, i.e. there exists some $t \le t_{\bar \delta }$ such that $T_\omega^t(x_*) = 0$;  
	\item[(b)] there exists $\delta^\ast$ sufficiently small such that any connected component of $(T^{n}_\omega)^{-1}( \Delta^\ast)$ has length less than $\bar\delta$ for any $n \le t_{\bar \delta }$, where $\Delta^\ast = (- \delta^\ast,\delta^\ast)$,  and is uniformly bounded away from $0$ for all $\omega\in\Omega$ ;  
	\item[(c)] the distance from $x_*$ to the boundary of $J$ is larger than $\delta/5$. 
\end{itemize}
We let $\tilde J=J \cap (T^{t}_\omega)^{-1}( \Delta^\ast)$. Then item (i) follows automatically. Likewise, item (ii) follows from (c). For item (iii), we know that $T_\omega^t (\tilde J) = \Delta^*$ and $T_\omega^t (J) = \Delta_1$, where $\Delta_1$ is just some larger interval containing $\Delta^*$. We also know by using the chain rule that
\begin{align*}
	| \tilde J | = \frac{|\Delta^*|}{DT_\omega^t(\xi_1)}, \quad
	|J| = \frac{|\Delta_1|}{DT_\omega^t(\xi_2)} 
\end{align*}
where $\xi_1 \in \tilde J$, $\xi_2 \in J$. Thus, 
\begin{align*}
	\frac{| \tilde J |}{|J|} = \frac{|\Delta^*|}{|\Delta_1|} \cdot \frac{DT_\omega^t(\xi_2)}{DT_\omega^t(\xi_1)}.
\end{align*}
To show there is lower bound, $\beta$, for this, we need to show there is an upper bound for $DT_\omega^t(\xi_1)$. Recall that $|J|\ge\delta$, that $T_\omega$ is expanding, and that both components of $J \backslash \tilde J$ have size greater than $\delta/5$. For $t=1$, the lower bound is automatic since $J \cap \Delta_0 = \emptyset$. For $1< t \le t_{\bar \delta}$, if $T_\omega^i (J) \cap \Delta_0 \neq \emptyset$ for some $i < t$, then $|\xi_1| \ge \delta/5$, and thus $DT_\omega^t(\xi_1)$ has an upper bound. This completes the proof
\end{proof}
The above lemma implies that, since each escape interval has length equal to or greater than $\delta$, each escape interval therefore contains a subinterval which maps bijectively onto $\Delta^*$ within a uniformly bounded number of iterates. We  use this property repeatedly in order to construct a full return partition 
$\Q^\omega(\Delta^\ast)$ of  $\Delta^*  =(-\delta^*, \delta^*)$,  i.e. for every $\omega\in\Omega$ we construct a countable  partition $\Q^\omega(\Delta^\ast)$ such that for every $J\in\Q^\omega(\Delta^\ast)$ there exists an associated return time $ \tau_\omega (J)\in\NN$ such that $T_\omega^{\tau_\omega(J)}:J\to \Delta^\ast$ is a diffeomorphism.\footnote{Notice that in principle we need to distinguish between different fibers and consider $T_\omega: I\times\{\omega\} \to I\times\{\sigma \omega\}$. Then we obtain the inducing domain at fiber $\omega$ by $\Delta_\omega^\ast= \Delta^\ast$. Then the induced map is $T_\omega^{\tau(\omega, k)}:  J \times \{\omega \}\to \Delta^\ast_{\sigma^{\tau(\omega, k)}\omega}$. This extension does not cause any problem, since all the estimates are uniform in $\omega$.}.

To this end, we start with an exponential partition of $\Delta^\ast$ of the form $\mathcal{P} (\Delta^*) = \{ I_{r,m} \}_{|r| \ge r_{\ast}}$, where each $ I_{r,m}$ is defined as in the beginning of subsection \ref{ssec:Escape}. We construct an escape partition on each $I_{r, m}$, which by extension naturally induces a partition on $\Delta^\ast$, denoted by $\P_{ \omega, 1}(\Delta^\ast)$. Thus, for every $J\in\P_{{\omega, 1}}(\Delta^\ast)$ we have $J \subset I_{r,m}$ for some $r$ and $m$ which has an associated escape time $E_\omega(J)$. Lemma \ref{lem:FR} implies that each $J\in \mathcal{P}_{{\omega,1}} (\Delta^*)$ contains a subinterval $\tilde J$ that maps diffeomorphically to $\Delta^*$ under some number of iterates bounded above by $E_\omega(J) + t^*$. Thus, in order to construct the next partition $\mathcal{P}_{{\omega,2}}$, we divide $J$ into $\tilde J$ and the components of $J \backslash \tilde J$. To each of these we assign the  first escape time $E_{\omega, 1}=E_\omega(J)$, and we assign to the returning subinterval $\tilde J$ the return time $\tau_\omega(\tilde J) = E_{\omega, 1}(\tilde J) + t(\tilde J)$, where $t(\tilde J)$ is the number of iterates such that $T_{\sigma^{E_{\omega, 1}}\omega}^{t(\tilde J)}(T_\omega^{E_{\omega, 1}} (\tilde J)) = \Delta^*$. We place $\tilde J$ in  $\mathcal{P}_{{\omega, 2}}$, and it will remain unchanged in all subsequent $\mathcal{P}_{{\omega, k}}$'s.

Next, we apply the escape partition algorithm to the connected components of $T_\omega^{E_{\omega,1}}(J \backslash \tilde J)$. This is possible since  Lemma \ref{lem:FR} guarantees that these components will be of size greater than $\delta/5$. Let $J_{NR}$ denote one of the non-returning components of $J \backslash \tilde J$, and let $K \subset J_{NR}$ be the preimage of a subinterval that is obtained after applying the escape partition algorithm to $T_\omega^{E_{\omega,1}}(J_{NR})$. We place $K$ in $\mathcal{P}_{{\omega, 2}}$, and again by Lemma \ref{lem:FR} there exists $\tilde K \subset K$ that also maps diffeomorphically to $\Delta^*$ after a bounded number of iterates. We divide $K$ into $\tilde K$ and the components of $K \backslash \tilde K$, and to each of these we assign the second escape time $E_{\omega, 2} = E_{\sigma^{E_{\omega, 1}}\omega} (T_\omega^{E_{\omega, 1}} K) + E_{\omega, 1}$, and to $\tilde K$ we assign the return time $\tau_\omega(\tilde K)= E_{\omega, 2}(\tilde K) + t(\tilde K)$.

We then repeat this process ad infinitum, by 1) taking each non-returning $L \in \mathcal{P}_{{\omega, k}}$ (for some $k$); 2) chopping $L$ into its returning component and non-returning components; 3) assigning to each of these the $k$-th escape time $E_{\omega, k} = E_{\sigma^{E_{\omega, k-1}} \omega}(T_\omega^{E_{\omega, k-1}}(L)) + E_{\omega, k-1}$; 4) assigning to the returning component $\tilde L$ the return time $\tau_\omega(\tilde L)=E_{\omega, k}(\tilde L) + t(\tilde L) $ and placing $\tilde L$ in $\mathcal{P}_{{\omega, k+1}}$; and 5) performing the escape partition on the remaining non-returning components and placing the resulting subintervals in $\mathcal{P}_{{\omega, k+1}}$.

Note that the $i$-th escape time of a  non-returning interval, $i \leq k$, will be the same as the escape time of its $i$-th ancestor. 

Finally, using the above we define the full partition as
\begin{align}
	\Q^\omega(\Delta^*)= \vee_{k=0}^\infty \mathcal{P}_{{\omega, k}}
\end{align}
i.e. the set of all possible intersections of all $\mathcal{P}_{{\omega, k}}$'s.  
Below we will show that $\Q^\omega$ defines a full-measure partition of $\Delta^\ast$ and set $\tau_\omega(J)=E_{\omega, k}(J)+ t(J)$ for every $J\in \Q^\omega$. 

\begin{remark}
One must note in the above algorithm that, when constructing $\mathcal{P}_{{\omega, k}}$, the choice of $\tilde J \subset J$ which maps diffeomorphically onto $\Delta^*$ is not necessarily unique. However, we also want that if $\tau_\omega(J) = k$ for some $J \in \Q^\omega$, then $\tau_\omega$ only depends on the first $k-1$ elements of $\omega$. Furthermore, we want that if $\omega$ and $\omega'$ share their first $k-1$ entries, then $\tau_\omega(J) = k' \le k$ for some $J \in \Q^\omega$ if and only if  $\tau_{\omega '}(J) = k'$. To ensure this, for any $\omega$'s that share the first $k-1$ entries, we require that the same $\tilde J \subset J$ is chosen for all such $\omega$ when defining $\tau_\omega(\tilde J) \le k$.
\end{remark}

The following lemmas are useful for us.

\begin{lemma}
	Let $\tilde J \subset J \in \mathcal{P}_{{\omega, i}}$ be a non-returning subinterval of $J$ which has had its $i$-th escape. Then we have
	\begin{align}\label{eq:15}
		\sum_{\substack{ \tilde J \subset J \\ E_{\omega, i+1}(\tilde J) \ge E_{\omega, i}(J) + n}}| \tilde J | \leq C_3 e^{- \gamma n} |J|
	\end{align}
where $C_3 = e^\mathcal{D} \cdot C_\delta$ uniformly over $\omega$.
	\begin{proof}
		Since $J \in \mathcal{P}_{{\omega, i}}$, we therefore know that $T_\omega^j (J)$ is contained in at most three intervals of the form $I_{(r,m)}$ for $j \leq E_{\omega, k}$, which means we can apply our bounded distortion Lemma \ref{lem:bdsm}. We make use of the following property of bounded distortion: if $T_\omega$ is of bounded distortion with distortion constant $\mathcal{D}$, for any intervals $A \subset I$ and $B \subset I$ we have
	\begin{align}
		\frac{|A|}{|B|} \leq e^\mathcal{D}\frac{|T_\omega^k(A)|}{|T_\omega^k(B)|}, \label{ineq:BD}
	\end{align}
	where $k \leq \min \{n_A, n_B \}$. Here $n_A \in \mathbb{N}$ is the constant such that for every $0 \leq k \leq n_A - 1$, either $T_\omega^k(A)$ does not intersect $\Delta_0$ or $T_\omega^k(A)$ is contained in at most three intervals of the form $I_{r,m}$, and $n_B$ denotes the same condition for $B$.
	Indeed, since $T_\omega$ is a differentiable function, we know from the mean value theorem that there exist $\xi_1\in A, \xi_2 \in B$ such that
	\begin{align*}
		DT_\omega^k(\xi_1) = \frac{|T_\omega^k(A)|}{|A|}, \quad DT_\omega^k(\xi_2) = \frac{|T_\omega^k(B)|}{|B|}.
	\end{align*}
	Thus,
	\begin{align*}
		\frac{|T_\omega^k(B)|}{|B|} \Big/ \frac{|T_\omega^k(A)|}{|A|} = \frac{| DT_\omega^k(\xi_2)|}{|DT_\omega^k(\xi_1)|} \leq e^\mathcal{D},
	\end{align*}
	and rearranging this we obtain inequality (\ref{ineq:BD}). Using this, we can write
	\begin{align*}
		\sum_{\substack{ \tilde J \subset J \\ E_{\omega, i+1}(\tilde J) \ge E_{\omega, i}(J) + n}} | \tilde J | &=  \sum_{\substack{ \tilde J \subset J \\ E_{\omega, i+1}(\tilde J) \ge E_{\omega, i}(J) + n}} \frac{| \tilde J |}{|J|} \cdot |J|  \\
		& \leq   e^\mathcal{D} \sum_{\substack{ \tilde J \subset J \\ E_{\omega, i+1}(\tilde J) \ge E_{\omega, i}(J) + n}} \frac{| T_\omega^{E_{\omega, i}(J)} (\tilde J) |}{|T_\omega^{E_{\omega, i}(J)} (J)|} \cdot |J|  && \Big( \text{using (\ref{ineq:BD})} \Big) \\
		& \leq   e^\mathcal{D} \sum_{\substack{ \tilde J \subset J \\ E_{\omega, i+1}(\tilde J) \ge E_{\omega, i}(J) + n}} | T_\omega^{E_{\omega, i}(J)} (\tilde J) |  && \Big( \text{using }|J| <  | T_\omega^{E_{\omega, i}(J)} ( J) | \Big)\\
		& \leq   e^\mathcal{D} \cdot C_\delta  e^{- \gamma n} |J|    && \Big( \text{using Proposition \ref{prop:Escape}} \Big) \\
		& =  C_3  e^{- \gamma n} |J|.
	\end{align*}
	\end{proof}
\end{lemma}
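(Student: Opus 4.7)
The plan is to combine the exponential escape-time estimate of Proposition~\ref{prop:Escape} (applied to the image $T_\omega^{E_{\omega,i}(J)}(J)$) with the bounded distortion estimate of Lemma~\ref{lem:bdsm} (applied on $J$ itself up to time $E_{\omega,i}(J)$). The pull-back of an escape-partition of the image to $J$ realises exactly the collection of non-returning subintervals $\tilde J \subset J$ whose $(i{+}1)$-th escape time is $E_{\omega,i}(J)+E_{\sigma^{E_{\omega,i}(J)}\omega}(T_\omega^{E_{\omega,i}(J)}\tilde J)$, so a tail bound on the image translates, via bounded distortion, into a tail bound on $J$.

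First I would record the elementary consequence of bounded distortion: if $T_\omega^k$ has distortion bounded by $\mathcal{D}$ on an interval containing $A$ and $B$, then by the mean value theorem applied to $|T_\omega^k(A)|/|A|$ and $|T_\omega^k(B)|/|B|$ we obtain
\begin{equation*}
\frac{|A|}{|B|} \le e^{\mathcal{D}}\,\frac{|T_\omega^k(A)|}{|T_\omega^k(B)|}.
\end{equation*}
By construction, for every $j\le E_{\omega,i}(J)$ the iterate $T_\omega^j(J)$ either misses $\Delta_0$ or is contained in at most three adjacent intervals $I_{r,m}$, so Lemma~\ref{lem:bdsm} applies with constant $\mathcal{D}=\mathcal{D}(\delta)$ on $J$ up to time $E_{\omega,i}(J)$; in particular the above distortion ratio inequality is valid at $k=E_{\omega,i}(J)$ for $A=\tilde J$ and $B=J$.

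Next I would apply this with $A=\tilde J$ and $B=J$, using also the trivial bound $|J|\le |T_\omega^{E_{\omega,i}(J)}(J)|$, to rewrite
\begin{equation*}
\sum_{\substack{\tilde J\subset J\\ E_{\omega,i+1}(\tilde J)\ge E_{\omega,i}(J)+n}} |\tilde J|
\;\le\; e^{\mathcal{D}}\sum_{\substack{\tilde J\subset J\\ E_{\omega,i+1}(\tilde J)\ge E_{\omega,i}(J)+n}} |T_\omega^{E_{\omega,i}(J)}(\tilde J)|.
\end{equation*}
The images $T_\omega^{E_{\omega,i}(J)}(\tilde J)$ are precisely the elements of the escape partition $\mathcal{P}^{\sigma^{E_{\omega,i}(J)}\omega}(T_\omega^{E_{\omega,i}(J)}(J))$ whose escape time is $\ge n$. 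Since $T_\omega^{E_{\omega,i}(J)}(J)$ is either an escape interval of the previous stage or the whole $\Delta_0$ (so it has length $\ge\delta$ and is disjoint from $\Delta_0$), Proposition~\ref{prop:Escape} applies to give
\begin{equation*}
\sum_{\substack{\tilde J\subset J\\ E_{\omega,i+1}(\tilde J)\ge E_{\omega,i}(J)+n}} |T_\omega^{E_{\omega,i}(J)}(\tilde J)|
\;\le\; C_\delta e^{-\gamma n}\,|T_\omega^{E_{\omega,i}(J)}(J)|.
\end{equation*}

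Finally I would absorb the image length using once more $|T_\omega^{E_{\omega,i}(J)}(J)|\le 1$ and then the distortion comparison the other way; in fact it is cleaner to apply Proposition~\ref{prop:Escape} in the normalized form and then use the distortion comparison between $|J|$ and $|T_\omega^{E_{\omega,i}(J)}(J)|$ directly, producing the constant $C_3 = e^{\mathcal{D}}\,C_\delta$ that is uniform in $\omega$ and in $i$. The only delicate point I anticipate is verifying that the distortion hypothesis of Lemma~\ref{lem:bdsm} really holds on all of $J$ up to $E_{\omega,i}(J)$ and not just on its returning orbit; this is exactly what the construction of the escape partition guarantees, so no genuine obstacle arises, but it is worth stating explicitly before invoking the distortion bound.
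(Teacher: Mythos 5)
Your proposal is correct and follows essentially the same route as the paper: establish the ratio form of bounded distortion via the mean value theorem and Lemma \ref{lem:bdsm}, pull back the escape partition of $T_\omega^{E_{\omega,i}(J)}(J)$, and apply the tail estimate of Proposition \ref{prop:Escape} to obtain $C_3=e^{\mathcal{D}}C_\delta$. Your normalized application of the proposition (keeping the ratio $|T_\omega^{E_{\omega,i}(J)}(\tilde J)|/|T_\omega^{E_{\omega,i}(J)}(J)|$ rather than discarding the denominator) is in fact a slightly cleaner bookkeeping of the same estimate, so there is nothing further to add.
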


An important corollary of this is the following:

\begin{corollary}
	We denote by $Q_\omega^{(n)}( E_{\omega,1}, ...,   E_{\omega,i })$ the set of all $J \in Q^\omega$ such that $J$ has escape times $\{  E_{\omega,1} <  E_{\omega,2} <... <  E_{\omega,i} < n \}$ and whose $(i +1)$-th escape time is after $n$. Then we have 
	\begin{align}\label{ineq:FPtails}
		\sum_{J \in Q_\omega^{(n)} ( E_{\omega,1}, ...,   E_{\omega,i })} |J| \leq C_4^i e^{- \gamma n} | \Delta^*|.
	\end{align}
	\begin{proof}
	For each $J \in  Q_\omega^{(n)} ( E_{\omega,1}, ...,   E_{\omega,i })$ there exists a sequence of ancestors $J \subset J_{(i)} \subset J_{(i-1)} \subset ... \subset J_{(2)} \subset J_{(1)} \subset J_{(0)} =  I_{r,m}$. Note that we can write
	\begin{align*}
		E_{\omega, i + 1} (J) &\geq E_{\omega, i } (J_{(i)}) + (E_{\omega, i + 1} (J) - E_{\omega, i } (J_{(i)})) \\
		E_{\omega, i } (J_{(i)}) &\geq E_{\omega, i-1 } (J_{(i-1)}) + (E_{\omega, i } (J_{(i)}) -  E_{\omega, i-1 } (J_{(i-1)})) \\
		&\vdots \\
		E_{\omega,1 } (J_{(1)}) &\geq E_{\omega,0 } (J_{(0)}) + (E_{\omega,1 } (J_{(1)}) - E_{\omega,0} (J_{(0)}) \\
	\end{align*}
where we set $E_{\omega,0 } (J_{(0)}) = 0$. Let us define $M_j = \{ J \subset J_{(j)} : E_{\omega, j + 1} (J) \geq E_{\omega, j } (J_{(j)}) + (E_{\omega, j + 1} (J) - E_{\omega, j } (J_{(j)}))\}$, and then apply (\ref{eq:15}) recursively: we have
	\begin{align*}
		\sum_{\substack{ J \subset J_{(i)} \\  J \in M_i}} |J| \leq C_3 e^{-\gamma(E_{\omega, i + 1} (J) - E_{\omega, i } (J_{(i)}))}|J_{(i)}|
	\end{align*}	
	and thus by recursion we have
	\begin{align*}
		\sum_{J \in Q_\omega^{(n)}( E_{\omega,1}, ...,   E_{\omega,i })} |J| \leq C_3^i e ^{- \gamma E_{\omega, i + 1}} |I_{(r,m)}| < C_3^i e^{-\gamma n}|\Delta^*|.
	\end{align*}
	\end{proof}
\end{corollary}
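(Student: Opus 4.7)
The plan is to prove this by recursive application of the tail estimate (15) from the preceding lemma, walked up the ancestor chain of each $J \in Q^{(n)}_\omega(E_{\omega,1}, \ldots, E_{\omega,i})$. Fixing the sequence $0 = E_{\omega,0} < E_{\omega,1} < \cdots < E_{\omega,i} < n$, every such $J$ sits inside a unique chain
\[
J \subset J_{(i)} \subset J_{(i-1)} \subset \cdots \subset J_{(1)} \subset J_{(0)} = I_{r,m},
\]
where $J_{(0)}$ is an element of the initial exponential partition $\mathcal{P}(\Delta^\ast) = \{I_{r,m}\}$, and each $J_{(j)} \in \mathcal{P}_{\omega,j}$ is the non-returning subinterval that reached its $j$-th escape at time $E_{\omega,j}$ during the construction of $\Q^\omega$.

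The heart of the argument is a single inductive step. At level $j$, with $J_{(j)}$ fixed, I will sum the sizes of those non-returning subintervals $J_{(j+1)} \subset J_{(j)}$ whose $(j+1)$-th escape time equals $E_{\omega,j+1}$. Because
\[
\{E_{\omega,j+1}(\tilde J) = E_{\omega,j+1}\} \subset \{E_{\omega,j+1}(\tilde J) \ge E_{\omega,j} + (E_{\omega,j+1} - E_{\omega,j})\},
\]
inequality (15) applied with the gap $E_{\omega,j+1} - E_{\omega,j}$ as its parameter yields
\[
\sum_{J_{(j+1)} \subset J_{(j)}} |J_{(j+1)}| \;\le\; C_3\, e^{-\gamma(E_{\omega,j+1} - E_{\omega,j})}\, |J_{(j)}|.
\]
At the top level I will apply (15) once more to $J_{(i)}$, this time with gap $n - E_{\omega,i}$ and using the defining condition $E_{\omega,i+1}(J) \ge n$ to replace the sharp equality by an inequality.

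Chaining these $i+1$ bounds telescopes the exponents, since
\[
(E_{\omega,1} - 0) + (E_{\omega,2} - E_{\omega,1}) + \cdots + (n - E_{\omega,i}) \;=\; n.
\]
Summing the resulting $C_3^{i+1} e^{-\gamma n} |I_{r,m}|$ estimate over the partition $\Delta^\ast = \bigsqcup_{r,m} I_{r,m}$ will then give a bound of the form $C_3^{i+1} e^{-\gamma n} |\Delta^\ast|$, from which the stated inequality follows by taking $C_4 \ge C_3$. The only point requiring care is the base step $j = 0$: here $I_{r,m}$ must play the role of a degenerate $\mathcal{P}_{\omega,0}$ element with escape time $0$, and (15) must be understood as reducing to Proposition \ref{prop:Escape} applied directly to $I_{r,m}$. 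This is exactly what happens, since the escape-partition algorithm was run on each $I_{r,m}$ at the very first step of the construction of $\Q^\omega$, so the base case amounts to the tail estimate of Proposition \ref{prop:Escape} without the distortion prefactor. Apart from this bookkeeping at $j=0$, I do not anticipate any further obstacle.
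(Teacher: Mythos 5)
Your proof takes essentially the same route as the paper: fix the escape sequence, walk up the chain of ancestors $J\subset J_{(i)}\subset\cdots\subset J_{(0)}=I_{r,m}$, apply the tail estimate \eqref{eq:15} at each level, and telescope the exponents to extract $e^{-\gamma n}$. Your bookkeeping is if anything slightly more careful than the paper's — you explicitly sum the resulting bound over all top-level intervals $I_{r,m}$ to produce $|\Delta^\ast|$ (where the paper simply bounds a single $|I_{r,m}|$ by $|\Delta^\ast|$), and your count of $i+1$ applications of \eqref{eq:15}, with the $j=0$ base case reducing to Proposition~\ref{prop:Escape}, is arguably more accurate than the paper's stated $C_3^i$; both discrepancies only affect the constant and not the exponential rate.
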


Furthermore, we make use of the following lemma, the proof of which can be found in \cite{BLvS}, lemma 3.4:
\begin{lemma}
	Let $\eta \in (0,1)$ and $\mathcal R_{k, q} = \{(n_1, n_2, ..., n_q) :  n_i \geq 1 \Hquad \forall i = 1,..., q : \sum_{i=0}^q n_i = k \}$. Then for $q \leq \eta k$ there exists a positive function $\hat \eta (\eta)$ such that $\hat \eta \to 0$ as $\eta \to 0$, and
	\begin{align}
		\mathcal \# R_{k, q}\le e^{\hat \eta k}. \label{ineq:escapetime}
	\end{align}
\end{lemma}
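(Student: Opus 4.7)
The plan is to prove the estimate by a direct stars-and-bars count combined with a standard binomial bound.

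First I would observe that $\mathcal R_{k,q}$ is exactly the set of compositions of $k$ into $q$ positive parts (taking the sum in the definition to be over $i=1,\dots,q$, which is the only interpretation consistent with the setup). By the standard stars-and-bars argument — placing $q-1$ separators among $k-1$ gaps between $k$ unit stars — the number of such compositions is
\[
\#\mathcal R_{k,q}=\binom{k-1}{q-1}\le\binom{k}{q}.
\]
So the problem reduces to bounding $\binom{k}{q}$ when $q\le\eta k$.

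Next I would invoke the classical inequality $\binom{k}{q}\le\bigl(\tfrac{ek}{q}\bigr)^{q}$ (which follows immediately from $\binom{k}{q}\le k^q/q!$ and $q!\ge (q/e)^q$). Taking logarithms and writing $t=q/k\in(0,\eta]$, this yields
\[
\log\binom{k}{q}\le q\log\!\left(\frac{ek}{q}\right)=k\bigl(t-t\log t\bigr).
\]
The function $g(t)=t-t\log t$ satisfies $g'(t)=-\log t>0$ on $(0,1)$, so $g$ is monotone increasing on that interval. Hence $g(t)\le g(\eta)$ for every $t\le\eta<1$.

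Finally I would set $\hat\eta(\eta):=\eta(1-\log\eta)$ and conclude $\#\mathcal R_{k,q}\le e^{\hat\eta(\eta)\,k}$; the fact that $\hat\eta(\eta)\to 0$ as $\eta\to 0^{+}$ is immediate. There is no real obstacle here — the only mild subtlety is ensuring monotonicity of $g$ on the relevant interval, which is transparent from the derivative. If one preferred an entropy-based derivation, the alternative bound $\binom{k}{q}\le e^{kH(q/k)}$ with $H(x)=-x\log x-(1-x)\log(1-x)$ works identically and gives the cleaner choice $\hat\eta(\eta)=H(\eta)$, which also vanishes as $\eta\to 0$.
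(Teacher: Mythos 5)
Your proof is correct and self-contained. Note that the paper does not actually prove this lemma at all — it simply defers to Bruin–Luzzatto–van Strien \cite{BLvS}, Lemma 3.4 — so there is no internal proof to compare against. Your identification of $\mathcal R_{k,q}$ with the compositions of $k$ into $q$ positive parts (correctly reading the sum as over $i=1,\dots,q$, since the $\sum_{i=0}^{q}$ in the statement is evidently a typo) gives $\#\mathcal R_{k,q}=\binom{k-1}{q-1}\le\binom{k}{q}$, and the rest — the bound $\binom{k}{q}\le(ek/q)^q$, the change of variables $t=q/k$, the monotonicity of $g(t)=t-t\log t$ on $(0,1)$, and the choice $\hat\eta(\eta)=\eta(1-\log\eta)$ — is all standard and checks out. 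The underlying reference in \cite{BLvS} uses essentially the same kind of Stirling/binomial-entropy estimate, so your route is not genuinely different in spirit; it is simply an explicit rendering of an argument the paper treats as a black box, and both your $(ek/q)^q$ bound and your suggested entropy-function alternative $H(\eta)$ are equally valid choices of $\hat\eta$.
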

\subsection{The tail of the return times}\label{ssec:tail}
To establish mixing rates, we need to obtain decay rates for the tails of the return times. By construction, the tail of the return times depend on the number of escape times that occurred before returning. We have the following lemma:
\begin{lemma} \label{C5}  Fix $\omega\in\Omega$ and let $(n_1, n_2, \dots, n_i)$ be the sequence of escape times of an interval $J \in \Q^\omega$  before time $n \geq 1$ such that $\sum_{j=1}^i n_j = n.$ Then there exist constants $B, b >0$ that are uniform in $\omega$ such that
	\begin{align}
		|\{ J \in \Q^\omega : \tau(J) > n \}| \leq Be^{-bn}.
	\end{align}
\end{lemma}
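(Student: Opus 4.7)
The plan is to estimate $|\{J \in \Q^\omega : \tau_\omega(J) > n\}|$ by decomposing according to the number $i$ of escapes an interval undergoes before returning to $\Delta^\ast$, and splitting the sum into a \emph{few escapes} and a \emph{many escapes} regime. Since $\tau_\omega(J) = E_{\omega, k}(J) + t(J)$ with $t(J) \le t^\ast$, the event $\{\tau_\omega(J) > n\}$ forces $J$ to still be non-returning at time $n' := n - t^\ast$, so $J \in Q_\omega^{(n')}(E_{\omega, 1}, \dots, E_{\omega, i})$ for some $i \ge 0$ and some sequence $0 < E_{\omega, 1} < \dots < E_{\omega, i} < n'$. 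Hence
\begin{equation*}
|\{J \in \Q^\omega : \tau_\omega(J) > n\}| \;\le\; \sum_{i \ge 0}\ \sum_{E_{\omega, 1} < \dots < E_{\omega, i} < n'} \bigl|Q_\omega^{(n')}(E_{\omega, 1}, \dots, E_{\omega, i})\bigr|.
\end{equation*}

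Fix a small $\eta > 0$ to be chosen later. For the terms with $i \le \eta n'$ I would apply the corollary's bound (\ref{ineq:FPtails}) to majorise each summand by $C_3^i e^{-\gamma n'}|\Delta^\ast|$, and count admissible length-$i$ sequences via the combinatorial Lemma (\ref{ineq:escapetime}) applied to compositions of $E_{\omega, i}$ into $i$ positive gaps; the total count is at most $n'\, e^{\hat\eta(\eta) n'}$ with $\hat\eta(\eta) \to 0$ as $\eta \to 0$. The resulting contribution is bounded by $(\eta n' + 1)\, n'\, e^{[\hat\eta(\eta) + \eta \log C_3 - \gamma] n'}\,|\Delta^\ast|$, which decays exponentially provided $\eta$ is chosen so that $b_1 := \gamma - \hat\eta(\eta) - \eta \log C_3 > 0$.

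For the terms with $i > \eta n'$ I would control the remaining non-returning mass directly via Lemma \ref{lem:FR}. At each step $j$ of the full-return construction, the non-returning set after step $j-1$ is subdivided by the escape partition into escape pieces $K$; Lemma \ref{lem:FR}(iii) applied to the image of $K$ under $T_\omega^{E_{\omega, j-1}}$, combined with the bounded-distortion Lemma \ref{lem:bdsm} used to pull back the returning subinterval, provides $\tilde K \subset K$ with $|\tilde K|/|K| \ge \beta' := \beta e^{-\mathcal D}$. Summing over escape pieces, a proportion at least $\beta'$ of the current non-returning mass is captured into $\Q^\omega$ at step $j$, so by induction the non-returning mass after $i$ escapes is at most $(1 - \beta')^i |\Delta^\ast|$. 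Intervals in $\Q^\omega$ with more than $\lfloor \eta n' \rfloor$ escapes are contained in this set, hence their total measure is at most $(1 - \beta')^{\lfloor \eta n' \rfloor}|\Delta^\ast|$. Combining both regimes and choosing any $b > 0$ strictly smaller than $\min(b_1,\, -\eta \log(1 - \beta'))$ yields the stated uniform bound $|\{\tau_\omega > n\}| \le B e^{-bn}$.

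The main obstacle is the few-escapes estimate: the naive bound $\sum_{i \le n'} \binom{n'-1}{i} C_3^i = (1 + C_3)^{n'}$ does not beat $e^{-\gamma n'}$ when $C_3 \ge e^\gamma - 1$. The combinatorial Lemma (\ref{ineq:escapetime}) is precisely what makes the balance work: restricting to $i \le \eta n'$ replaces the crude binomial count by the subexponential factor $e^{\hat\eta(\eta) n'}$ with $\hat\eta(\eta) \to 0$, which for small $\eta$ is dominated by $e^{-\gamma n'}$ and also absorbs the $C_3^{\eta n'}$ factor. The complementary large-$i$ regime must then be handled by the uniform capture proportion $\beta'$ of Lemma \ref{lem:FR}, since here the combinatorial lemma is no longer available. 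Balancing these two exponents by choosing $\eta$ small produces the uniform exponential tail.
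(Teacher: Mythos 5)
Your proposal follows essentially the same route as the paper: decompose $\{\tau_\omega > n\}$ by the number $i$ of escapes, split into a few-escapes regime handled by the corollary's tail bound (\ref{ineq:FPtails}) together with the combinatorial Lemma (\ref{ineq:escapetime}), and a many-escapes regime handled by the uniform capture proportion coming from Lemma \ref{lem:FR}, then balance by choosing the cutoff proportion small. Your treatment is in fact slightly more careful than the paper's in two places — you shift to $n' = n - t^\ast$ to account for the bounded number of post-escape iterates before return, and you correctly pull the capture proportion $\beta$ back through the escape map using bounded distortion to get $\beta' = \beta e^{-\mathcal D}$, and you also state explicitly the requirement $\gamma - \hat\eta(\eta) - \eta\log C_3 > 0$, where the paper's claim that ``$C_5 = \sum_{i<\zeta n} C_3^i$'' can be kept from growing by choosing $\zeta$ small is a bit loose — but these are refinements of the same argument, not a different one.
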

\begin{proof}
	Let us define the following:
	\begin{align}
		&Q^{(n)} = \{ J^\omega \in \Q^\omega : \tau(J) > n \} \\
		&Q_i^{(n)} = \{ J \in Q^{(n)} : E_{\omega, i-1}(J) \leq n < E_{\omega, i}(J)\} \\
		&Q_i^{(n)}(n_1, \dots, n_i) = \{J \in Q_i^{(n)}: \sum_{j=1}^k n_j = E_{\omega, k}(J), 1 \leq k \leq i - 1 \}
	\end{align}
	We decompose $Q^{(n)}$ into the following sums:
	\begin{align*}
		|Q^{(n)}| = \sum_{i \leq n} |Q_i^{(n)}| = \sum_{i \leq \zeta n} |Q_i^{(n)}| +  \sum_{\zeta n < i < n} |Q_i^{(n)}|
	\end{align*}
	where $\zeta$ denotes the proportion of total time $n$ that we consider having few escapes ($< \zeta n$) or many escapes ($> \zeta n$), which the two sums above represent respectively. For the many escapes, we have
	\begin{align}
		|Q_i^{(n)}| \leq ( 1- \beta)^i |\Delta^*| = 2 \delta(1- \beta)^i,
	\end{align}
	where we recall $\beta$, defined in lemma \ref{lem:FR}, is the uniform minimum proportion of size between an escape interval and its returning subinterval. Furthermore, for big enough $i$ we have
	\begin{align*}
		\sum_{\zeta n < i < n} |Q_i^{(n)}| &\leq 2 \delta \sum_{\zeta n < i < n} (1- \beta)^i \leq 2 \delta \sum_{\zeta n < i} (1- \beta)^i  \\
		&\le 2 \delta \sum_{j = 0}^\infty (1- \beta)^{\zeta n + j}  
		\le \frac{2  (1- \beta)^{\zeta n}}{\beta} \leq C_4 e^{- \gamma_\beta n}
	\end{align*}
	where $\gamma_\beta = \zeta (\log ( 1 - \beta) ^{-1})$ and $C_4 = 2/\beta$.

	For the intervals that have few escapes, we have
	\begin{align*}
		 \sum_{i < \zeta n} |Q_i^{(n)}|  = & \sum_{i < \zeta n}  \sum_{\substack{ (n_1, ..., n_i) \\ \sum_{j=1}^i n_j =n }}  |Q_i^{(n)} (n_1, ..., n_i)| \\
		&=\sum_{i < \zeta n} \#\mathcal R_{n, i}  \sum_{J \in Q_i^{(n)}(n_1, \dots, n_i) } |J| && \text{using inequality (\ref{ineq:escapetime})} \\
		& \leq \sum_{i < \zeta n} \#\mathcal R_{n, i} C_3^i e^{- \gamma n} |\Delta^*| 
		 \leq e^{\hat \eta n} e^{- \gamma  n} \sum_{i < \zeta n}   C_3^i  = C_5 e^{\hat \eta n} e^{- \gamma  n}
	\end{align*}	
	with $C_5 = \sum_{i < \zeta n}   C_3^i$. This constant can grow exponentially if $C_3\ge1$, so to avoid this we choose a sufficently small $\zeta$. Combining this with the inequality for many escapes, we have
	\begin{align*}
		|\{ J \in \Q^\omega : \tau(J) > n \}| \leq C_4 e^{- \gamma_\beta n} + C_5 e^{(\hat \eta  - \gamma ) n} \leq Be^{-b n}
	\end{align*}
	for positive constants $B$ and $b$, as long as we have chose small enough $\zeta$.
\end{proof}

Thus, for every $\omega \in \Omega$ we have that $\tau_\omega(x)$ is defined and finite for a.e. $x \in \Delta^*$. It should be emphasized that the rates of decay of the return times are exponential, independently of $\omega$. Furthermore, for every $\omega \in \Omega$, if $J \in \mathcal Q^\omega$ has escape times $(n_1, n_2, \dots, n_i)$ before returning, then for the return time we have $\tau_\omega(J) < n_i + t^*$, where $t^*$ is the constant in lemma \ref{lem:FR} which is independent of $\omega$. 

\subsection{Gibbs-Markov} Let  $\Delta^\ast$, $\Q^\omega$ and $\tau_\omega$ be as in subsection \ref{ssec:FR}. By definition, $T_\omega^{\tau_\omega}:\Delta^\ast\to \Delta^\ast$ has fiber-wise  Markov property: every $J\in\Q^\omega$ is mapped diffeomorphically onto $\sigma^{\tau_\omega(J)}\omega \times \Delta^\ast $. Notice that if $\tau_\omega(J)=k$ then $\tau_\omega$  depends only the first $k-1$ components of $\omega$. This implies that if $\tau_\omega(x)=k$ and $\omega_i'=\omega_i$ for $0\le i\le k-1$ then $\tau_\omega(x)=\tau_{\omega'}(x)$, i.e. $\tau_\omega(x)$ is a stopping time. The uniform expansion is immediate in our case. The tail of the return times are obtained in subsection \ref{ssec:tail}. We still need to show bounded distortion and aperiodicity, which we address below. 

Set $F_\omega(x)=T_\omega^{\tau_\omega(x)}(x)$ for $(\omega, x)\in \Omega \times \Delta^\ast$, and define $\mathcal{Q} = \big\{ \{\omega\} \times J  \Hquad |  \Hquad\omega \in \Omega,  J \in \mathcal{Q}^\omega \big\}.$ Note that $F_\omega^2 (x) = F_{\sigma^{\tau_\omega(x)}\omega} \circ F_\omega (x)$, and if we set $\ell = {{\tau_{\sigma^{\tau_\omega(x)}\omega}(F_\omega(x))  + \tau_\omega(x)}}$, then $F_\omega^3 (x) = F_{\sigma^\ell \omega} \circ F_{\sigma^{\tau_\omega(x)}\omega} \circ F_\omega (x)$, etc.. 


As usual, we introduce a separation time for $x,y\in\mathcal Q^\omega$ by setting 
\[
s_\omega(x, y)=\min\{n\mid F^n_\omega(x)\in J, F^n_\omega(y)\in J', J\neq J', \Hquad J, J' \in\Q \}.
\]
\begin{lemma}
There exists constants $\tilde D$ and $\hat \beta\in (0, 1)$ such that for all $\omega \in \Omega$, for all $J\in \Q^\omega$, and for all $x, y\in J$ 
\[
\log\frac{|DF_\omega(x)|}{|DF_\omega(y)|}\le \tilde D \hat \beta^{s_{\sigma^{\tau_\omega}\omega}(F_\omega(x),F_\omega(y))}.
\]
\end{lemma}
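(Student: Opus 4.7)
The strategy is to sharpen the bounded distortion bound of Lemma \ref{lem:bdsm}: that lemma yields a uniform constant $\mathcal D$, whereas here we need the distortion to decay geometrically in the separation time $s := s_{\sigma^{\tau_\omega}\omega}(F_\omega x, F_\omega y)$. The route is to re-examine the pointwise sum obtained in the proof of Lemma \ref{lem:bdsm} and bound every term using the fact that a large $s$ forces the full orbit $\{(x_{\omega,i}, y_{\omega,i})\}_{i=0}^{n-1}$ to be exponentially close, not just of bounded size.

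I would first fix $x, y \in J \in \Q^\omega$ with $n = \tau_\omega(J)$, so that $F_\omega|_J = T^n_\omega|_J$, and apply Lemma \ref{lem:bdsm} on $J$ (its hypotheses hold by construction of $\Q^\omega$). Tracing through its proof up to the penultimate display gives the intermediate inequality
\[
\log \frac{|DF_\omega(x)|}{|DF_\omega(y)|} = \log \frac{|DT^n_\omega(x)|}{|DT^n_\omega(y)|} \le \kappa \sum_{i=0}^{n-1} \frac{|x_{\omega,i} - y_{\omega,i}|^\alpha}{|x_{\omega,i}|^\alpha}.
\]
Next, let $\tau^{(1)} = n, \tau^{(2)}, \dots, \tau^{(s+1)}$ be the successive induced-system return times and put $M = \tau^{(1)} + \cdots + \tau^{(s+1)}$. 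By the definition of $s$ there is a sub-interval $\tilde J \subseteq J$ containing both $x$ and $y$ such that $T^M_\omega(\tilde J) \subseteq \Delta^\ast$. Combining the uniform expansion \eqref{eq:unifexp} and bounded distortion (Lemma \ref{lem:bdsm} applied at time $M-i$ starting from $T^i_\omega(\tilde J)$) yields, for every $0 \le i \le n-1$,
\[
|x_{\omega,i} - y_{\omega,i}| \le |T^i_\omega(\tilde J)| \le e^{\mathcal D}\tilde C^{-1} e^{-\ell(M-i)} |\Delta^\ast|,
\]
and since $i \le n-1$ and $M \ge n + s$ one has $M - i \ge s$.

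I would then revisit the three sub-sums appearing in the proof of Lemma \ref{lem:bdsm} (free iterates before the last return, terminal free block after the last return, and return times themselves). In each, the factor $|J_{k-1}|/\delta$ or $|J_{\nu_{s(k)}}|/\delta$ was merely bounded by a uniform constant; replacing it with the sharper bound obtained above introduces an additional multiplicative factor $e^{-\alpha \ell (M-\text{index})}$, and pulling out $e^{-\alpha \ell s}$ from each yields
\[
\sum_{i=0}^{n-1} \frac{|x_{\omega,i} - y_{\omega,i}|^\alpha}{|x_{\omega,i}|^\alpha} \le \tilde D_0 \, e^{-\alpha \ell s}
\]
for a constant $\tilde D_0$ uniform in $\omega$. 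Setting $\hat \beta = e^{-\alpha \ell} \in (0, 1)$ and $\tilde D = \kappa \tilde D_0$ gives the conclusion.

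The main obstacle is verifying the claim used in the middle step that every iterate $T^k_\omega(\tilde J)$, $0 \le k \le M-1$, continues to satisfy the ``at most three adjacent $I_{r,m}$'' hypothesis of Lemma \ref{lem:bdsm}, not just for $k < n$ but throughout the $s$ subsequent returns. This must be checked inductively against the successive layers $\mathcal P_{\omega,1}, \dots, \mathcal P_{\omega,s+1}$ of the full return construction, together with the essential-return chopping rule of Proposition \ref{prop:Escape}. A secondary bookkeeping concern is the polynomial factor $c_{r_i}/r_i^\vartheta$ appearing in the return-time sub-sum; since the exponential gain $e^{-\alpha \ell s}$ dominates any polynomial dependence on the return depths, it is absorbed harmlessly into $\tilde D_0$.
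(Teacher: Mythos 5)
Your plan extracts the geometric factor by replacing the numerators $|x_{\omega,i}-y_{\omega,i}|$ in the distortion sum with the absolute backward-expansion bound $|T_\omega^i(\tilde J)|\le \tilde C^{-1}e^{-\ell(M-i)}|\Delta^\ast|$. This works for the free-iterate terms (where $|x_{\omega,i}|\ge\delta$), but it breaks down exactly at the return terms, and your closing remark misdiagnoses the difficulty there. At a return time $\nu$ of depth $r$ the denominator is $|x_{\omega,\nu}|^\alpha\approx e^{-\alpha r}$; in Lemma \ref{lem:bdsm} this is cancelled because the numerator is bounded by the three-interval property, $|T_\omega^\nu(J)|\lesssim e^{-r}/r^{\vartheta}$, leaving only the harmless factor $c_r/r^{\vartheta\alpha}$. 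Once you discard that bound in favour of $e^{-\ell(M-\nu)}|\Delta^\ast|$, the uncancelled factor $e^{\alpha r}$ appears, and it is exponential in the return depth, not polynomial, with $r$ unbounded over the elements of $\Q^\omega$. It is not dominated by $e^{-\alpha\ell(M-\nu)}$ unless one proves a lower bound of the form $\ell(M-\nu)\gtrsim r$ on the time remaining after a return of depth $r$; no such bound is established in the paper (and it is not obvious, since later returns can produce arbitrarily large one-step expansion near the singularity). Interpolating between the two numerator bounds does not rescue this either, because any positive weight on the backward-expansion bound reintroduces a positive power of $e^{\alpha r}$. So as written the argument does not close.

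The paper avoids this entirely by keeping the bound \emph{relative} rather than absolute. First it proves a cylinder-contraction estimate $|J_n|\le\bar\kappa^{\,n-1}\mathrm{diam}(\Q^\omega)$ for the refined partitions $\Q^\omega_n$ of the induced map, using only the full-branch Markov property ($F_\omega J=\Delta^\ast$) together with the bounded-distortion inequality \eqref{ineq:BD}: each full branch must lose a definite proportion $\bar\kappa$ of its mass at every inducing step. Then, for $x,y$ with separation time $n$, it writes $\frac{|T_\omega^i(J_n)|}{|T_\omega^i(J_1)|}\le e^{\mathcal{D}}\frac{|F_\omega(J_n)|}{|F_\omega(J_1)|}$ for all $i\le\tau_\omega(J_1)$, so the numerators in the distortion sum are the \emph{same} quantities $|T_\omega^i(J_1)|$ already controlled in Lemma \ref{lem:bdsm}, multiplied by the uniformly small factor $|F_\omega(J_n)|^\alpha/|F_\omega(J_1)|^\alpha\lesssim\bar\kappa^{\alpha n}$. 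This preserves the cancellation at return times while extracting the geometric decay, and it also sidesteps your ``main obstacle'' of verifying the three-adjacent-interval property along the subsequent $s$ return blocks, since only the first block's distortion estimate is ever invoked. Your approach could be repaired along the same lines (extract the small factor as a ratio via bounded distortion rather than as an absolute length bound), but that is essentially the paper's proof.
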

\begin{proof} Let us define
$$\Q_{n}^\omega= \{ J_{i_0} \cap (F_\omega)^{-1}(J_{i_1}) \cap \dots \cap (F_\omega^{n-1})^{-1}(J_{i_{n-1}}) | J_{i_0} \in \mathcal Q^\omega, J_{i_1} \in \mathcal Q^{\sigma^{\tau_\omega(J_{i_0})}\omega}, \dots  \}.$$
Then for every $x, y\in J_{n}\in\Q_{n}^\omega$, $F^i_\omega(x)$ and $F^{i}_\omega(y)$ stay in the same element of $\Q$ for $i=1, 2, \dots, n-1$ . Furthermore, $F^n_\omega(J_{n})=\Delta^\ast$. Define diam$(\Q^\omega)=\sup\{|J|: J\in\Q^\omega\}$. We show that there exists $\bar\kappa\in(0, 1)$ such that  for all $\omega\in\Omega$ and   $J_n\in \Q^\omega_n$ holds  
\begin{equation}\label{eq:size}
|J_n|\le \bar\kappa^{n-1}\diam(\Q^\omega).
\end{equation}
We prove this inequality via induction. For $n=1$ notice that $\Q^\omega_{1}=\Q^\omega$ and proceed as follows: fix $\omega\in\Omega$ and let $J_{2}\in\Q^\omega_{2}$ be such that $J_{2}\subset J_{1}\in\Q^\omega_{1}$. Since $F_\omega  J_{1}=\Delta^\ast$ and $F_\omega  J_{2}\in\Q_1^\omega$. we have 
\[
\frac{|J_{1}\setminus J_{2}|}{|J_{1}|}\ge e^{-\D}\frac{|F_\omega(J_{1}\setminus J_{2})|}{|F_\omega(J_{1})|}\ge e^{-\D}\frac{2\delta^\ast-\diam(\Q^\omega)}{2\delta^\ast}=\bar\kappa\in (0, 1).
\]
Thus,  we obtain $|J_{2}| \le (1-\bar\kappa)|J_{1}|$.  Iterating the the process we obtain \eqref{eq:size}.

Consider  $x,y\in J_1$  with $n=s_\omega(x,y)$ and $F_\omega(x), F_\omega(y)\in J_n\in\Q^\omega_{n}$ and suppose that $ J_{n}\subset J\in\Q^{\omega}$. By equation \eqref{ineq:BD} and Lemma \ref{lem:bdsm} we have 
$\frac{|T_\omega^k(J_n)|}{|T_\omega^k(J_1)|}\le e^{\D} \frac{|F_\omega(J_n)|}{|F_\omega(J_1)|}$ for all $k\le n$ and $\omega\in\Omega$. Thus proceeding as in the proof of Lemma \ref{lem:bdsm} we have 
\[\begin{aligned}
\log\frac{|DF_\omega(x)|}{|DF_\omega(y)|}=\sum_{i=0}^{\tau_\omega(J_1)}\frac{|T^i_\omega(J_n)|^\alpha}{|T^i_\omega(y)|^\alpha}\le e^{\alpha\D}\sum_{i=0}^{\tau_\omega(J_1)}\frac{|T^i_\omega(J_1)|^\alpha}{|T^i_\omega(y)|^\alpha} \frac{|F_\omega(J_n)|^\alpha}{|F_\omega(J_1)|^\alpha}\\
\le \frac{e^{\alpha\D}}{2\delta^{\ast\alpha}}|F_\omega(J_n)|^\alpha\sum_{i=0}^{\tau_\omega(J_1)}\frac{|T^i_\omega(J_1)|^\alpha}{|T^i_\omega(y)|^\alpha}.
\end{aligned}\]
The sum on the right hand side can be bounded by a constant $\D$ as in the proof of Lemma \ref{lem:bdsm}.  Therefore, using \eqref{eq:size} and 
$\diam(\Q^\omega)\le 2\delta^\ast$ we have 
\[
\log\frac{|DF_\omega(x)|}{|DF_\omega(y)|}\le e^{\alpha\D}\D \bar\kappa^{\alpha n}=\tilde D \hat \beta^{s_\omega(F_\omega(x),F_\omega(y))}. 
\]
for suitable constant $\tilde D$ and $\hat \beta$.  
\end{proof}
\subsection{Aperiodicity} Finally we address the problem of aperiodicity; i.e.  there exists $N_0\in \NN$ and two sequence $\{t_i\in\NN, i=1,2, \dots, N_0\}$, $\{\eps_i>0, i=1, \dots, N_0\}$ such that g.c.d.$\{t_i\}=1$ and $|\{x\in\Delta^\ast\mid \tau_\omega(x)=t_i\}|>\eps_i$ for almost every $\omega\in\Omega$.  To show this, we recall that the original Lorenz system and all sufficiently close systems are mixing \cite{LMP}. Thus, we can proceed as in \cite[Remark 3.14]{ABR}. Since the unperturbed map admits a unique invariant probability measure, it can be lifted to the induced map over $\Delta^\ast$ constructed following the algorithm in the previous 2 subsections. Moreover, the lifted measure is invariant and mixing for the tower map. Therefore, there exists partition $\Q^0$ of $\Delta^\ast$ and a return time $\tau^0:\Delta^\ast\to \NN$ such that $\tau_i^0=\tau^0(Q_i)$, $Q_i\in\Q^0$ such that g.c.d.$\{\tau_i^0\}_{i=1}^{N_0}=1$ for some $N_0>1$. Now, by shrinking $\eps$ if necessary, we can ensure that the first $N_0$ elements of the partition $\Q^\omega$  satisfy $|Q_i^\omega\cap Q_i|\ge |Q_i|/2$ with $\tau_\omega^0(Q_i^\omega)=\tau_i^0$ for $i=1, \dots N_0$ and for all $\omega\in\Omega$. Thus, we may take $\eps_i=|Q_i|/2$. Notice that we define only finitely many domains in this way. Therefore, the tails estimates, distortion, etc. are not affected, and we can stop at some $\eps_0>0$, thereby proving aperiodicity.\\

Finally, we can now give the proof of Theorem \ref{thm:1d} by applying the results of \cite{BBMD} and \cite{Du}:

\begin{proof} (Proof Theorem \ref{thm:1d})
In the above we have shown that conditions (C2)-(C6) of the Appendix hold. Condition (C1) is true by construction. Using this and Theorem \ref{thm:randexp} in the Appendix we are now ready to prove Theorem \ref{thm:1d}. We begin the proof by defining the \textit{tower projection} $\pi_\omega : \Delta_\omega \to I$ for almost every $\omega \in \Omega$ as $\pi_\omega (x, \ell) = T_{\sigma^{-\ell}\omega}^\ell (x)$. One should note that $\pi_{\sigma \omega} \circ \hat F_\omega = T_\omega \circ \pi_\omega $, where $\hat F_\omega$ is the tower map defined in the appendix. Then $\mu_\omega=(\pi_{\omega})_*\nu_\omega$ provides an equivariant family of measures for $\{T_\omega\}$ and the absolute continuity follows from the fact that $T_\omega$ are non-singular. Now, `lift' the observables $\varphi \in L^\infty(X)$ and $\psi \in C^\eta(X)$ to the tower. Let  $\bar{\varphi}_\omega = \varphi \Hquad \circ \pi_\omega$ and $\bar{\psi}_\omega = \psi \Hquad \circ  \pi_\omega$ respectively. Now, using the definition of $\mu_\omega$ and that we have
\begin{align*}
	 \int (\varphi \circ T^n_\omega) \psi d\mu_\omega &=  \int (\varphi \circ T^n_\omega \circ \pi_\omega) (\psi \circ \pi_\omega) d\nu_\omega \\
 			&= \int (\varphi \circ  \pi_{\sigma^n \omega} \circ \hat F_\omega^n) \bar \psi_\omega d\nu_\omega \\
	&= \int (\bar{\varphi}_{\sigma^n \omega}  \circ \hat F_\omega^n) \bar \psi_\omega h_\omega dm,
\end{align*}
where $d\nu_\omega = h_\omega \cdot dm$. Equally, we have
\begin{align}
	\int \varphi d\mu_{\sigma^n \omega} 
	= \int (\varphi \circ \pi_{\sigma^n \omega}) d\nu_{\sigma^n \omega}  
	= \int \bar \varphi_{\sigma^n \omega}   d\nu_{\sigma^n \omega}
\end{align}
and
\begin{align}
	\int \psi d\mu_\omega 
	= \int (\psi \circ \pi_\omega)  d\nu_\omega 
	= \int \bar \psi_\omega  h_\omega dm.
\end{align}
Thus, we have
\begin{align*}
\Big| \int (\varphi \circ T_\omega^n) \psi d\mu_\omega - \int \varphi d\mu_{\sigma^n \omega} &\int \psi d\mu_\omega \Big| = \\
&\Big| \int (\bar{\varphi}_{\sigma^n \omega} \circ \hat F_\omega^n) \bar{\psi} h_\omega dm - \int \bar \varphi_{\sigma^n \omega} d\nu_{\sigma^n \omega} \int \bar \psi_\omega h_\omega dm \Big|.
\end{align*}
This means that if we can show that $\bar{\varphi}_\omega \in \mathcal{L}_\infty^{K_\omega}$ and $\bar \psi_\omega h_\omega \in \mathcal{F}_\gamma^{K_\omega}$, then we can apply Theorem (\ref{thm:randexp}) in the appendix and thereby obtain exponential decay of correlations on the original dynamics. The first condition is trivial since $\varphi \in L^\infty(X)$. To show the second condition, since $F_\omega$ is uniformly expanding; i.e., $|(T_\omega)'|\ge\kappa>1$, we have $|x-y| \le {(\frac{1}{\kappa})}^{s_\omega(x, y)}$. Hence, for any $(x, \ell), (y, \ell)\in \Dom$ we have the following inequality: 
\begin{equation}\label{eq:step1}
|\bar\psi (x, \ell)- \bar\psi (y, \ell)| \leq \|\psi\|_\eta ({\frac{1}{\kappa}})^{\eta\cdot s_\omega(x,y)}\le\|\psi\|_\eta(\frac{1}{\kappa^\eta\gamma})^{s_\omega(x,y)}\gamma^{s_\omega(x,y)},
\end{equation}
where $\| \cdot \|_\eta$ is the H\"older norm with exponent $\eta \in (0,1)$. 

Let us define $\gamma =\frac{1}{\kappa}$, and let take the separation time on the tower, $\hat s_\omega : \Delta_\omega \times \Delta_\omega \to \mathbb{Z}_+$, which is defined in the appendix. Notice that $\hat s_\omega((x, \ell), (y, \ell)) = s(x,y)$. Thus, inequality \eqref{eq:step1} implies
\begin{align*}
&|(\bar\psi h_\omega)(x, \ell)- (\bar\psi h_\omega)(y, \ell)| \le {\|\psi\|}_{\infty} {\|h_\omega\|}_{\mathcal F_\gamma^{K_\omega}}\gamma^{\hat s_\omega((x, \ell), (y, \ell))}   
\\& + {\|h_\omega\|}_{\mathcal L_\infty}\|\psi\|_\eta\gamma^{\hat s_\omega((x, \ell), (y, \ell))}.
\end{align*}
This completes the proof.
\end{proof}

\section{Appendix}\label{appendix}
\subsection{Abstract random towers with exponential tails}
This appendix is based on the work and results of \cite{BBMD, Du}. See also the related work of \cite{BBR, LV} on random towers. Using the notation and definitions in section \ref{proofs}, we can introduce a random tower for almost every $\omega$ as follows:
\[
	\Delta_\omega = \{ (x, \ell) \in \Delta^{*} \times \mathbb{Z}_+ \Hquad | \Hquad  x  \in \Q^{\sigma^{-\ell} \omega}(\Delta^*), \Hquad j,  \ell \in \mathbb{N}, \Hquad 0 \le \ell \le \tau_{\sigma^{-\ell}\omega}(x) - 1\}.
\]
We can also define the induced map $\hat F_\omega : \Delta_\omega \to \Delta_{\sigma \omega}$ as
\[
	\hat F_\omega(x, \ell) = \begin{cases}
				(x, \ell + 1), \quad &\ell + 1 < \tau_{\sigma^{-\ell}\omega}(x) \\
				(T_{\sigma^{-\ell}\omega}^{\ell + 1}(x), 0), &\ell + 1 = \tau_{\sigma^{-\ell}\omega}(x)				
			\end{cases}.
\]
Notice that this allows us to construct a partition on the random tower as
\[
	\mathcal{Z}_\omega= \{ \hat F_{\sigma^{-\ell}\omega}^\ell (J^{\sigma^{-\ell}\omega}) \Hquad  | \Hquad J^{\sigma^{-\ell}\omega} \in \Q^{\sigma^{-\ell}\omega} (\Delta^*), \Hquad  \tau_\omega |{J^{\sigma^{-\ell}\omega}} \ge \ell + 1, \Hquad \ell \in \mathbb{Z}_+ \}.
\]
Let us define the separation time on the tower as
$$
	\hat s_\omega((x,\ell),  (y, \ell))=\min\{n\mid \hat F^n_\omega(x, \ell )\in J, \hat F^n_\omega(y, \ell)\in J', J\neq J' \in \mathcal{Z}_\omega \}.
$$

Assume:
\begin{itemize}
	\item[(C1)] \textbf{Return and separation time:} the return time function $\tau_\omega$ can be extended to the whole tower as $\tau_\omega: \Delta_\omega \to \mathbb{Z}_+$ with $\tau_\omega$ constant on each $J \in \mathcal{Z}_\omega$, and there exists a positive integer $p_0$ such that $\tau_\omega \ge p_0$. Furthermore, if $(x, \ell)$ and $(y, \ell)$ are both in the same partition element $J \in \mathcal{Z}_\omega$, then $\hat s_\omega((x,0), (y,0)) \ge \ell$, and for every $(x, 0), (y,0) \in J \in \mathcal{Z}_\omega$ we have  
	\[
		\hat s_\omega((x,0), (y,0))= \tau_\omega(x, 0) + \hat s_{\sigma^{\tau_\omega}\omega}(\hat F_\omega^{\tau_\omega (x,0)}(x,0), \hat F_\omega^{\tau_\omega (y,0)}(y,0))
	\]
	\item[(C2)] \textbf{Markov property:} for each $J \in \Q^\omega (\Delta^*)$ the map $\hat F_\omega^{\tau_\omega} |J  : J \to \Delta^{*}$ is bijective and both $\hat F_\omega^{\tau_\omega} |J$ and its inverse are non-singular.
	\item[(C3)] \textbf{Bounded distortion:} There exist constants $0 < \gamma < 1$ and $\mathcal{D}> 0$ such that for all $J \in \mathcal{Z}_\omega$ and all $(x, \ell), (y,\ell) = x, y \in J $
		\[
			\Big| \frac{J\hat F_\omega^{\tau_\omega}(x)}{J\hat F_\omega^{\tau_\omega}(y)} - 1 \Big| \le \mathcal{D} \gamma^{\hat s_{\sigma^{\tau_\omega}\omega}(\hat F_\omega^{\tau_\omega}(x,0), \hat F^{\tau_\omega}_\omega(y,0))}.
		\]
	where $J\hat F_\omega^{\tau_\omega}$ denotes the Jacobian of $\hat F_\omega^{\tau_\omega}$.
	\item[(C4)] \textbf{Weak forwards expansion:} the diameters of the partitions $\bigvee^n_{j=0}  (\hat F_\omega^j)^{-1} \mathcal{Z}_{\sigma^j \omega}$ tend to zero as $n \to \infty$.
	\item[(C5)] \textbf{Return time asymptotics:} there exist constants $B, b > 0$, a full-measure subset $\Omega_1 \subset \Omega$ such that for every $\omega \in \Omega_1$ we have
\[
		m( \{ x \in \Delta^{*} | \tau_\omega > n \}) \le Be^{-bn}
\]
We also have for almost every $\omega \in \Omega$
\[
	m(\Delta_\omega) = \sum_{\ell \in \mathbb{Z}_+} m( \{\tau_{\sigma^{-\ell}\omega} > \ell \}) < \infty
\]
which gives us the existence of a family of finite equivariant sample measures. We also have for almost every $\omega \in \Omega$
\[
	\lim_{\ell_0 \to \infty} \sum_{\ell \ge \ell_0} m(\Delta_{\sigma^{\ell_0} \omega, \ell}) = 0.
\]

	\item[(C6)] \textbf{Aperiodicity:} there exists $N_0 \ge 1$, a full-measure subset $\Omega_2 \subset \Omega$ and a set $\{ t_i \in \mathbb{Z}_+ : \Hquad i = 1,2,...,N \}$ such that g.c.d.$\{ t_i \} = 1$ and there exist $\epsilon_i >0$ such that for every $\omega \in \Omega_2$ and every $i \in \{1,...,N_0\}$ we have $m( \{ x \in \Lambda | \Hquad \tau(x) = t_i \}) > \epsilon_i$. \\
\end{itemize}
Let us also define the following function spaces:
\begin{align}
	\mathcal{F}_\gamma^+ = \{ \varphi_\omega : \Delta_\omega \to \mathbb{R} \Hquad | \Hquad &\exists C_\varphi > 0 \text{ such that } \forall J \in \mathcal{Z}_\omega, \text{ either } \varphi_\omega|J \equiv 0 \\
	&\text{or }\varphi_\omega|J>0 \text{ and } \Big| \log \frac{\varphi_\omega(x)}{\varphi_\omega(y)} \Big| \le C_{\varphi} \gamma^{s_\omega(x,y)}, \forall x,y \in J \}. \nonumber
\end{align}
For almost every $\omega$ let $K_\omega: \Omega \to \mathbb{R}_+$ be a random variable which satisfies $\inf_\Omega K_\omega > 0$ and
\[
	\mathbb{P} (\{ \omega \Hquad | \Hquad K_\omega > n \}) \le e^{-un^v},
\]
where $u,v>0$. We then define the spaces
\begin{align}
	\mathcal{L}^{K_\omega}_\infty &= \{ \varphi_\omega: \Delta_\omega \to \mathbb{R} \Hquad | \Hquad \exists C_{\varphi}^{'} > 0, \sup_{x \in \Delta_\omega}|\varphi_\omega(x)| \le C_{\varphi}^{'} K_\omega \} \\
	\mathcal{F}_\gamma^{K_\omega} &= \{ \varphi_\omega \in \mathcal{L}^{K_\omega}_\infty \Hquad | \Hquad \exists C_{\varphi}'' > 0, |\varphi_\omega(x) - \varphi_\omega(y)| \le C_{\varphi}'' K_\omega \gamma^{s_\omega(x,y)}, \forall x, y \in \Delta_\omega \}
\end{align}
We assign to $\mathcal{L}^{K_\omega}_\infty$ and $\mathcal{F}_\gamma^{K_\omega}$ the norms $|| \varphi ||_{\mathcal{L}_\infty} = \inf C_\varphi '$ and $||\varphi||_\mathcal{F} = \max \{ \inf C_{\varphi_\omega}', \inf C_{\varphi_\omega}''\}$ respectively, which makes them Banach spaces.\\ \\
We can now apply the following theorem from \cite{Du, BBMD}:

\begin{theorem}\label{thm:randexp}
Let $\hat F_\omega$ satisfy (C1)-(C6), and let $K_\omega$ satisfy the above condition. Then for almost every $\omega \in \Omega$ there exists an absolutely continuous $\hat F_\omega$-equivariant probability measure $\nu_\omega = h_\omega m$ on $\Delta_\omega$, satisfying $(\hat F_\omega)_* \nu_\omega = \nu_{\sigma \omega}$, with $h_\omega \in \mathcal{F}_\gamma^+$. Furthermore, there exists a full-measure subset $\Omega_2 \subset \Omega$ such that for every $\omega \in \Omega_2$, $\varphi_\omega \in \mathcal{L}_\infty^{K_\omega}$ and $\psi_\omega \in \mathcal{F}_\gamma^{K_\omega}$ there exists a constant $C_{\varphi, \psi}$ such that for all $n$ we have
\begin{align}\label{ineq:future}
		\Big| \int (\varphi_{\sigma^n \omega} \circ \hat F_\omega^n) \psi_\omega dm - \int \varphi_{\sigma^n \omega} d\nu_{\sigma^n \omega} \int \psi_\omega dm \Big| \le  C_{\varphi, \psi} e^{-bn}
\end{align}
and 
\begin{align}\label{ineq:past}
		\Big| \int (\varphi_\omega \circ \hat F_{\sigma^{-n} \omega}^n) \psi_{\sigma^{-n} \omega} dm - \int \varphi_\omega d\nu_\omega \int \psi_{\sigma^{-n} \omega} dm \Big| \le  C_{\varphi, \psi} e^{-bn}.
\end{align}

\end{theorem}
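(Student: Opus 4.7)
The plan is to follow the random tower approach of \cite{BBMD, Du}: build the equivariant family of densities via a transfer-operator pullback, and obtain correlation decay through a quenched coupling argument driven by aperiodicity and the exponential tail of $\tau_\omega$.

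First I would introduce the quenched transfer operator $\mathcal{L}_\omega : L^1(\Delta_\omega, m) \to L^1(\Delta_{\sigma \omega}, m)$ associated with $\hat F_\omega$, characterised by $\int (g \circ \hat F_\omega)\, f \, dm = \int g\, (\mathcal{L}_\omega f)\, dm$. Condition (C5) ensures $m(\Delta_\omega) < \infty$ $\PP$-a.s., so setting $f_\omega = \mathbf{1}_{\Delta_\omega}/m(\Delta_\omega)$ and $h_\omega^{(n)} = \mathcal{L}_{\sigma^{-n}\omega}^n f_{\sigma^{-n}\omega}$ produces a sequence of probability densities on $\Delta_\omega$. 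Bounded distortion (C3), combined with the monotonicity $\hat s_\omega(x,y) = \tau_\omega + \hat s_{\sigma^{\tau_\omega}\omega}(\hat F_\omega^{\tau_\omega}x, \hat F_\omega^{\tau_\omega}y)$ from (C1), yields the telescoping estimate
\[
\Big|\log \frac{h_\omega^{(n)}(x)}{h_\omega^{(n)}(y)}\Big| \le \mathcal{D} \sum_{k \ge 0} \gamma^{\hat s_{\sigma^k\omega}(\hat F_\omega^k x, \hat F_\omega^k y)} \le \frac{\mathcal{D}}{1 - \gamma},
\]
so $h_\omega^{(n)}$ lies in a fixed cone inside $\mathcal{F}_\gamma^+$ uniformly in $n$ and $\omega$. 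A random Birkhoff--Hilbert cone contraction argument then yields a limit $h_\omega \in \mathcal{F}_\gamma^+$ with $\mathcal{L}_\omega h_\omega = h_{\sigma \omega}$; the measures $\nu_\omega = h_\omega m$ form the desired equivariant family, with uniqueness following from weak forwards expansion (C4).

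For the correlation bounds (\ref{ineq:future})--(\ref{ineq:past}), I would implement a quenched version of Young's coupling argument. Given two cone densities $\phi, \phi'$ at fibre $\omega$, I would decompose $\mathcal{L}_\omega^n \phi - \mathcal{L}_\omega^n \phi'$ along full returns of $\hat F_\omega$, at each return matching a definite fraction of mass on the base and iterating the unmatched residual. Aperiodicity (C6), with return times $t_1, \dots, t_{N_0}$ of $\gcd=1$ and uniform lower measure bounds $\varepsilon_i$, ensures coupling succeeds within a bounded number of returns with positive probability uniformly in $\omega$, while the exponential tail $m\{\tau_\omega > n\} \le B e^{-bn}$ from (C5) converts this into an exponential rate for the mass still above the base. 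Applied with $\phi$ proportional to $\psi_\omega$ (after a standard additive renormalisation to make it positive) and $\phi' \propto h_\omega$, then tested against $\varphi_{\sigma^n \omega}$, this yields (\ref{ineq:future}); the analogous construction at $\sigma^{-n}\omega$, coupling the pull-back dynamics of $\hat F_{\sigma^{-n}\omega}^n$ instead, gives (\ref{ineq:past}).

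The main obstacle will be quantitative control of the random prefactors. The norms $\|\varphi_\omega\|_{\mathcal{L}_\infty^{K_\omega}}$ and $\|\psi_\omega\|_{\mathcal{F}_\gamma^{K_\omega}}$ carry a multiplicative $K_\omega$ that a priori may be large on bad $\omega$; to absorb this into a uniform $e^{-bn}$ rate I would use the stretched-exponential tail $\PP(K_\omega > n) \le e^{-un^v}$ together with Borel--Cantelli, restricting to a full-measure subset $\Omega_2$ on which $K_{\sigma^{\pm n}\omega}$ grows at most polynomially in $n$. The polynomial growth is then swallowed into $C_{\varphi,\psi}$ at the cost of slightly shrinking $b$ below the tail rate in (C5). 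A secondary technical point is measurability of $\omega \mapsto h_\omega$, which is handled by standard measurable-selection results for operator-pullback limits in the random dynamical systems literature.
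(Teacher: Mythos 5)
The paper does not prove Theorem \ref{thm:randexp} at all: it is stated in the Appendix precisely so that it can be \emph{imported} from \cite{BBMD} and \cite{Du} and then applied to the tower constructed in Section \ref{proofs}, so there is no in-paper argument to compare yours against. Judged on its own terms, your outline is a faithful reconstruction of the proof strategy in those references: pull back reference densities by the quenched transfer operator, use (C1) and (C3) to keep the iterates in a uniform log-H\"older cone, extract the equivariant family $h_\omega$, and obtain the decay rate by a quenched coupling (Young-style matching of mass at returns to the base), with the stretched-exponential tail of $K_\omega$ and Borel--Cantelli absorbing the random prefactors into $C_{\varphi,\psi}$ on a full-measure set. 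Three places where your sketch is thinner than the actual argument in \cite{BBMD, Du} and where the real work lies: (i) the coupling is carried out on the product tower $\Delta_\omega\times\Delta_\omega$, and one must match mass only at \emph{simultaneous} returns of the two coordinates to the base; it is exactly here that aperiodicity (C6), via the $\gcd\{t_i\}=1$ condition and a uniform lower bound $\eps_i$ on the measure of $\{\tau_\omega=t_i\}$, produces a uniform-in-$\omega$ positive fraction of matched mass within a bounded number of steps --- your phrase ``coupling succeeds within a bounded number of returns with positive probability'' is the conclusion of that number-theoretic lemma, not its proof; (ii) your Birkhoff--Hilbert cone-contraction step and the coupling step are not two independent mechanisms --- in \cite{BBMD} the convergence to $h_\omega$ and the correlation decay both come out of the same coupling estimate, and invoking Hilbert-metric contraction separately would require checking finite projective diameter of the image cone, which you do not address; (iii) the additive renormalisation turning $\psi_\omega$ into a cone density must keep the renormalised function uniformly bounded away from zero so that it lies in $\mathcal F^+_\gamma$ with a controlled constant, and the resulting bound must then be matched to the precise asymmetric form of \eqref{ineq:future}, where $\varphi$ is integrated against $\nu_{\sigma^n\omega}$ but $\psi$ against $m$. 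None of these is a fatal gap --- they are the standard technical core of the cited proofs --- but as written your proposal is a correct road map rather than a proof.
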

Usually there would be a constant $C_\omega$ dependent on $\omega$ in front of the right hand side for both \ref{ineq:future} and \ref{ineq:past} because there is often a waiting time dependent on $\omega$ before we see exponential tails of return times. However, since we have uniform tails in (C5), in the sense that we do not have a waiting time and instead we immediately have exponential tails of returns, we just have a uniform constant $B>0$ instead of $C_\omega$, which we can just absorb into $C_{\varphi, \psi}$.

\end{document}